\newtheorem{theorem}{Theorem}[section] 
\newtheorem*{theorem*}{Theorem}
\newtheorem{lemma}[theorem]{Lemma}
\newtheorem{corollary}[theorem]{Corollary}
\theoremstyle{remark}
\newtheorem{remark}[theorem]{Remark}
\newtheorem{example}[theorem]{Example}
\numberwithin{equation}{section}
\newcommand{\triv}{\mathbbm{1}}
\newcommand{\F}{\mathbb{F}}
\newcommand{\N}{\mathbb{N}}
\newcommand{\Z}{\mathbb{Z}}
\newcommand{\Prob}{\mathbb{P}}
\newcommand{\cC}{\mathcal{C}}
\newcommand{\cH}{\mathcal{H}}
\newcommand{\cO}{\mathcal{O}}
\newcommand{\cP}{\mathcal{P}}
\newcommand{\cX}{\mathcal{X}}
\newcommand{\End}{\operatorname{End}}
\newcommand{\GL}{\operatorname{GL}}
\newcommand{\Irr}{\operatorname{Irr}}
\newcommand{\Syl}{\operatorname{Syl}}
\newcommand{\Sym}{\operatorname{Sym}}
\begin{document}
	
\title[Sylow double cosets of the symmetric group]{On the number and sizes of double cosets of Sylow subgroups of the symmetric group}

\author[P.~Diaconis]{Persi Diaconis}
\address[P.~Diaconis]{Department of Mathematics, Stanford University, 390 Jane Stanford Way, Stanford, 94305, California, USA}
\email{diaconis@math.stanford.edu}

\author[E.~Giannelli]{Eugenio Giannelli}
\address[E.~Giannelli]{Dipartimento di Matematica e Informatica	U.~Dini, Viale Morgagni 67/a, Firenze, Italy}
\email{eugenio.giannelli@unifi.it}

\author[R.~Guralnick]{Robert M.~Guralnick}
\address[R.~Guralnick]{Department of Mathematics, University of Southern California, Los Angeles, CA 90089-2532, USA}
\email{guralnic@usc.edu}

\author[S.~Law]{Stacey Law}
\address[S.~Law]{School of Mathematics, Watson Building, University of Birmingham, Edgbaston, Birmingham B15 2TT, UK}
\email{s.law@bham.ac.uk}

\author[G.~Navarro]{Gabriel Navarro}
\address[G.~Navarro]{Departament de Matem\`atiques, Universitat de Val\`encia, 46100 Burjassot, Val\`encia, Spain}
\email{gabriel@uv.es}

\author[B.~Sambale]{Benjamin Sambale}
\address[B.~Sambale]{Institut f\"ur Algebra, Zahlentheorie und Diskrete Mathematik, Leibniz Universit\"at Hannover, Welfengarten 1, 30167 Hannover, Germany}
\email{sambale@math.uni-hannover.de}

\author[H.~Spink]{Hunter Spink}
\address[H.~Spink]{Department of Mathematics, University of Toronto, 40 St. George St., Toronto, ON, Canada}
\email{hunter.spink@utoronto.ca}

\begin{abstract}
	Let $P_n$ be a Sylow $p$-subgroup of the symmetric group $S_n$. We investigate the number and sizes of the $P_n\setminus S_n/P_n$ double cosets, showing that `most' double cosets have maximal size when $p$ is odd, or equivalently, that $P_n\cap P_n^x=1$ for most $x\in S_n$ when $n$ is large. We also find that all possible sizes of such double cosets occur, modulo a list of small exceptions.
\end{abstract}


\maketitle


\section{Introduction}
For $n$ a natural number and $p$ a prime, let $P_n$ be a Sylow $p$-subgroup of the symmetric group $S_n$. This splits $S_n$ into a disjoint union of $P_n$ double cosets $P_n x P_n$ as $x$ varies over $S_n$. We ask:
\begin{itemize}
	\item How many double cosets are there?
	\item What are their typical sizes?
\end{itemize}

Motivation for these questions from probability, enumerative group theory and from modular representation theory are described in \cref{sec:motivation} below. Moreover, any simple question about $S_n$ is worth studying(!).

The double cosets have sizes varying between $|P_n|$ and $|P_n|^2$. 
In Corollary \ref{cor: allsizes} below, we show that if $n\geq 9$ then $S_n$ admits $(P_n,P_n)$-double cosets of all possible sizes.
Moreover, our main result shows that, for $p>2$ and $n$ large, most double cosets are as large as possible. Equivalently, since $|P_n xP_n|=|P_n|^2/|P_n\cap P_n^x|$, we have that $P_n\cap P_n^x=1$ for almost all $x$. 

\begin{theorem}\label{thm:1}
	Let $p$ be a prime. Let $f(n,p)$ be the probability that $|P_n\cap P_n^x|>1$ where $x\in S_n$ is chosen uniformly at random.
	\begin{enumerate}
		\item For $p>2$, $f(n,p)\to 0$ as $n\to \infty$, uniformly in $2<p\le n$.
		\item For $p=2$, $\liminf_{n\to\infty} f(n,2) \ge 1-e^{-1/2}$.
	\end{enumerate}
\end{theorem}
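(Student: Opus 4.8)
The plan is to bound $f(n,p)$ above by a first-moment (union bound) over the minimal nontrivial elements of $P_n\cap P_n^x$, and, for $p=2$, to produce a matching lower bound by a Poisson approximation. Since $P_n\cap P_n^x$ is a $p$-group, it is nontrivial if and only if it contains an element of order $p$; such an element is an order-$p$ element $g\in P_n$ with $x^{-1}gx\in P_n$. As $x$ is uniform, $x^{-1}gx$ is uniform on the $S_n$-class $\cC_g$ of $g$, so $\Prob_x[x^{-1}gx\in P_n]=|\cC_g\cap P_n|/|\cC_g|$. Grouping by cycle type gives
\[
 f(n,p)\le \sum_{g\in P_n,\ \mathrm{ord}(g)=p}\frac{|\cC_g\cap P_n|}{|\cC_g|}=\sum_{k\ge1}\frac{N_k^2}{|C_k|},
\]
where $C_k$ is the class of products of $k$ disjoint $p$-cycles, $|C_k|=n!/((n-kp)!\,k!\,p^k)$, and $N_k:=|C_k\cap P_n|$ counts order-$p$ elements of $P_n$ with exactly $k$ cycles.

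For the main term $k=1$, writing $n$ in base $p$ and using $P_n=\prod_i P_{p^i}^{a_i}$, a single $p$-cycle of $P_n$ must generate one bottom-level $C_p$, so $N_1=(p-1)\sum_{i\ge1}a_ip^{i-1}\le (p-1)n/p<n$, while $|C_1|=n(n-1)\cdots(n-p+1)/p$. Hence $N_1^2/|C_1|\le \tfrac{(p-1)^2}{p}\cdot \tfrac{n}{(n-1)\cdots(n-p+1)}$, which tends to $0$ for $p\ge3$. For the required uniformity I would split into $3\le p\le n/2$, where the denominator is $\ge (n/2)^{p-1}$ and the resulting bound is maximised at $p=3$, giving $O(1/n)$; and $n/2<p\le n$, where $P_n\cong C_p$ forces $k=1$ with $N_1=p-1$ while $|C_1|$ is super-exponentially large. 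This already exhibits the dichotomy of the theorem: at $p=2$ the same computation gives $N_1=\lfloor n/2\rfloor$, $|C_1|=\binom n2$, and $N_1^2/|C_1|\to 1/2$, not $0$.

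The crux of part (1) is the tail $\sum_{k\ge2}N_k^2/|C_k|\to0$, uniformly for $p\ge3$, and this is where I expect the main difficulty. I would control $N_k$ through the wreath recursion $F_i(t)=F_{i-1}(t)^p+(p-1)|P_{p^{i-1}}|^{p-1}t^{p^{i-1}}$ for the cycle-counting generating function $F_i(t)=\sum_{g\in P_{p^i},\ g^p=1}t^{\#\mathrm{cycles}(g)}$, with $N_k=[t^k]\prod_i F_i(t)^{a_i}$. Two families must be tracked: the block-diagonal elements (products of $k$ independent bottom cycles, contributing roughly $N_1^k/k!$) and the imprimitive elements introduced at each level, which are forced to have $k=p^{i-1}$ cycles. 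The expected outcome is a geometric domination $N_k^2/|C_k|\le (N_1^2/|C_1|)\rho^{\,k-1}$ with $\rho=\rho(n,p)\to0$, so that the $k=1$ term dominates; the delicate point is to bound the imprimitive contributions uniformly in $p$, as these are precisely the terms not captured by the naive $N_1^k/k!$ heuristic.

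For the lower bound when $p=2$ (part (2)), I would work only with the transpositions of $P_n$, which are pairwise disjoint (each is the nontrivial element of a distinct bottom $C_2$), giving $N_1=\lfloor n/2\rfloor$ of them. Setting $I_t=\mathbbm{1}[x^{-1}tx\in P_n]$ and $W=\sum_t I_t$, we have $f(n,2)\ge \Prob_x[W\ge1]$, and one computes $\mathbb{E}[W]=N_1^2/\binom n2\to1/2$. Using transitivity of conjugation on ordered pairs of disjoint transpositions, the second factorial moment is $\mathbb{E}[W(W-1)]=(N_1(N_1-1))^2/(\binom n2\binom{n-2}2)\to 1/4$, and more generally the $j$-th factorial moment converges to $(1/2)^j$. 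By the method of moments (equivalently a Stein--Chen estimate), $W$ converges in distribution to a Poisson$(1/2)$ variable, so $\Prob_x[W\ge1]\to1-e^{-1/2}$ and thus $\liminf_n f(n,2)\ge 1-e^{-1/2}$. The only real care needed is to confirm that the joint conjugacy probabilities factorise in the limit, i.e.\ that distinct surviving transpositions are asymptotically independent.
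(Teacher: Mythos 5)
Your part (b) is correct and is essentially the paper's own argument: the paper likewise restricts attention to the elementary abelian subgroup generated by the $\lfloor n/2\rfloor$ pairwise disjoint bottom-level transpositions of $P_n$, lets $W$ count the transpositions surviving conjugation by a random $x$, and invokes the Poisson$(1/2)$ limit for $W$ (citing the matching-pair computation of Diaconis--Holmes rather than redoing the factorial moments, which is exactly the calculation you sketch and which is correct: transitivity of $S_n$ on ordered tuples of disjoint transpositions gives $\mathbb{E}[W(W-1)\cdots(W-j+1)]\to(1/2)^j$). A small bonus of your formulation is that it treats $n$ odd and $n$ even uniformly, whereas the paper handles odd $n$ by a separate reduction.

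For part (a) your route is genuinely different from the paper's, and it has a genuine gap exactly where you flag it. The union bound $f(n,p)\le\sum_{k\ge1}N_k^2/|C_k|$ is valid, and your treatment of the $k=1$ term (including uniformity over $3\le p\le n$, and the instructive failure at $p=2$) checks out. But the whole burden of the proof is the tail $\sum_{k\ge2}N_k^2/|C_k|$, and this is never established: the claimed geometric domination $N_k^2/|C_k|\le(N_1^2/|C_1|)\rho^{k-1}$ is announced as an ``expected outcome''. The identity $N_k=\binom{b}{k}(p-1)^k$ (with $b$ the number of bottom blocks) is provable only for $k<p$, since an order-$p$ element with fewer than $p$ $p$-cycles must stabilise every bottom block; for $k\ge p$ imprimitive elements appear at every level of every wreath-product factor of $P_n$, their counts are governed by $|P_{p^{i}}|^{p-1}$ rather than by $N_1^k/k!$, and one must beat $|C_k|$ for every $k$ up to $n/p$ uniformly in $p$. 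Spot checks (e.g.\ $n=9$, $p=3$, where the $k=3$ stratum is the largest term of the sum) show this is delicate even if ultimately true, so the estimate cannot be waved through. The paper sidesteps the counting entirely: by \cref{lem:1'}, $P_n\cap P_n^x\ne1$ forces a common element of order $p$ centralising $z$ and $z^x$ for $z\in Z(P_n)$ of order $p$ with fewer than $p$ fixed points; by the Eberhard--Garzoni theorem (\cref{thm:EG}) two random conjugates of $z$ generate $A_n$ with probability tending to $1$, and $A_n$ has trivial centraliser in $S_n$, with \cref{lem:special} reducing to the fixed-point-free case to get uniformity in $p$. To complete your more elementary approach you would need to actually carry out the uniform bound on $N_k$ for $p\le k\le n/p$; as written, part (a) is not proved.
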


To familiarise the reader with the problem, in the following example we treat the smallest non-trivial case (i.e.~we study in detail the case of $S_p$). 
\begin{example}\label{ex:n=p}
	Let us fix the rank $n$ of our symmetric group to be equal to the prime number $p$. In this case, given $P_p\in\Syl_p(S_p)$ we have that $P_p$ is isomorphic to 
	$C_p$, the cyclic group of order $p$. Hence the double cosets have size $p$ or $p^2$. Let $n_i$ be the number of double cosets of size $p^i$, for each $i\in\{1,2\}$. Then we have
	\[ p! = pn_1+p^2n_2. \]
	If $|P_p wP_p|=|P_p|$, then $wP_p=P_pw$ so $w\in N_{S_p}(P_p)$. As explained in \cref{sec:background}, we know that $N_{S_p}(C_p)=C_p\rtimes C_{p-1}$, so $n_1=|N_{S_p}(C_p)/C_p|=p-1$ and $n_2=\frac{(p-1)!-(p-1)}{p}$.
	
	For instance, notice that if $p=11$ then $n_1=10$ and $n_2=329890$. This illustrates our main results, that all possible double coset sizes appear, and most double coset sizes are as large as possible.
\end{example}

\Cref{thm:1} shows that $p=2$ is different; letting $P_n\in\Syl_2(S_n)$, \cref{tab:1} gives some data on the number of $(P_n,P_n)$-double cosets in $S_n$ of size $2^{m+k}$, $0\le k\le m$, where $|P_n|=:2^m$ (see \eqref{eq:m}). Note that:
\begin{itemize}
	\item The largest entry in each row almost always occurs in column $2^{2m-1}$ (although the $2^{2m-1}$ and $2^{2m}$ entries are roughly equal).
	\item The first column contains all 1s as $P_n$ is self-normalising in $S_n$ when $p=2$ (see \cref{lem: minsize}).
	\item The second column (number of double cosets of size $2^{m+1}$) is explained in \cref{sec:new5} below.
	\item Going from row $n=2^k-1$ to row $n=2^k$, the entries decrease down each column. (Compare this with the structure of $P_n$; see \cref{sec:sn-sylow}.)
\end{itemize}

\begin{landscape}
	\begin{table}[p]
		\begin{small}
		\[
		\begin{array}{cc|*{17}c|c}
			\\
			\\
			\\
			\\
			\\
			&n&2^{m}&2^{m+1}&2^{m+2}&2^{m+3}&2^{m+4}&2^{m+5}&2^{m+6}&2^{m+7}&2^{m+8}&2^{m+9}&2^{m+10}&2^{m+11}&2^{m+12}&2^{m+13}&2^{m+14}&2^{m+15}&2^{m+16}&\sum\\\hline
			&1&1&&&&&&&&&&&&&&&&&1\\
			&2&1&&&&&&&&&&&&&&&&&1\\
			&3&1&1&&&&&&&&&&&&&&&&2\\
			&4&1&1&&&&&&&&&&&&&&&&2\\
			&5&1&1&1&1&&&&&&&&&&&&&&4\\
			&6&1&2&2&2&1&&&&&&&&&&&&&8\\
			&7&1&3&7&13&11&&&&&&&&&&&&&35\\
			&8&1&1&2&4&3&3&2&&&&&&&&&&&16\\
			&9&1&1&2&5&6&10&15&11&&&&&&&&&&51\\
			&10&1&1&3&8&13&22&32&43&22&&&&&&&&&145\\
			&11&1&2&4&14&39&97&218&395&342&&&&&&&&&1112\\
			&12&1&3&8&17&27&53&97&154&247&341&197&&&&&&&1145\\
			&13&1&3&9&23&53&150&399&965&2173&3818&3335&&&&&&&10929\\
			&14&1&4&15&50&135&341&826&1942&4399&8983&13737&10967&&&&&&41400\\
			&15&1&5&22&89&328&1202&4268&13960&41210&104946&194791&181963&&&&&&542785\\
			&16&1&1&2&6&15&24&55&100&209&407&955&1938&4755&8390&13783&9743&&40384\\
			&17&1&1&2&6&16&29&77&189&537&1609&5223&15898&45965&113336&208574&191706&&583169\\
			&18&1&1&2&7&21&51&158&442&1240&3555&10602&32233&95157&257733&589685&974086&816834&2781808
		\end{array}
		\]
		\caption{The number of $(P_n,P_n)$-double cosets in $S_n$ according to their size, where $P_n$ is a Sylow 2-subgroup of $S_n$. Note that the size of any such double coset must be a power of 2 between $2^m$ and $2^{2m}$; see \eqref{eq:dc-size-range}. Here, $m$ is given by $|P_n|=2^m$; see \eqref{eq:m} for explicit formulas for the value of $m$.}
		\label{tab:1}
		\end{small}
	\end{table}
\end{landscape}

\subsection{Motivation}\label{sec:motivation}

Our route to studying these problems comes from `P\'olya theory' -- enumeration under symmetry. Let $\cX$ be a finite set and $G$ a finite group acting on $\cX$.
This splits $\cX$ into disjoint orbits 
\[ \cX=\cO_1\cup\cO_2\cup\cdots\cup\cO_k. \]
Natural questions are:
\begin{itemize}
	\item How many orbits are there?
	\item What are the typical sizes?
	\item Do the orbits have `nice names'?
	\item How can one `pick a random orbit'?
\end{itemize}

Of course, in this generality, this is a hopelessly out of focus question; there are too many groups acting on too many sets. Nonetheless, there are many important special cases. See \cite{Keller} for a review when $\cX$ is a group.

Computer scientists Jerrum and Goldberg \cite{Jerrum,GJ} introduced a general algorithm for random generation which allows the first two problems to be studied -- the Burnside process. Their interest was computational complexity and they highlight special examples where the questions are \#P-complete.

In contrast, \cite{BD,DHowes,DZ20,DZ25} show that there are many examples where enumeration is feasible (and interesting):
\begin{itemize}
	\item Suppose $\cX=G$ and $G$ acts on itself by conjugation ($x^g=g^{-1}xg$). Then the orbits are conjugacy classes and the questions become: how many classes? What are their typical sizes?
	
	When $G=S_n$, the conjugacy classes are indexed by partitions and the Burnside process gives a useful algorithm for generating a random partition of $n$. Work in \cite{DHowes} shows this is effective for $n$ up to $10^9$, for instance.
	
	\item Let $H$ and $K$ be subgroups of a finite group $G$. Then $H\times K$ acts on $G$ via $(h,k)\cdot g=hgk^{-1}$, with orbits the double cosets $H\setminus G/K$. 
	
	When $G=S_n$ and $H=S_\lambda$, $K=S_\mu$ for two partitions $\lambda$ and $\mu$ of $n$ (i.e~$H$ and $K$ are parabolic subgroups), the double cosets are indexed by contingency tables: arrays of non-negative integers with row sums $\lambda$ and column sums $\mu$ \cite[Theorem 1.3.10]{JK}. For references, enumerative theory and statistical applications, see \cite{DHowes,DSimper}.
	
	If $G=\GL_n(\F_q)$ and $H=K=B$, the Borel subgroup of invertible upper triangular matrices, then we have the Bruhat decomposition $\GL_n(\F_q)=\bigsqcup_{w\in S_n} BwB$; see \cite{DRS} for probabilistic applications.
\end{itemize}

Michael Geline asked about $P_n\setminus S_n/P_n$. This connects to modular representation theory by the following route. Consider a finite group $G$ with a split $BN$-pair $B,N,U$ in characteristic $p$ (see \cite{Carter72} or \cite{Carter85} for definitions and references). Let $k$ be an algebraically closed field of characteristic $p$. The irreducible representations of $G$ over $k$ can be studied via the approach of \cite{Sawada,Curtis70,Richen,Green} and \cite{Tinberg} (this last reference is a well written account with full references). A central piece of the story is the Hecke algebra
\[ E = L_k(U\setminus G/U) = \End_G((\triv_U)^G). \]
They show:
\begin{itemize}
	\item $E$ is a Frobenius algebra.
	\item Every simple right $E$--module is 1-dimensional, given by a multiplicative character $\psi:E\to k$.
	\item Each such $\psi$ is determined by a vector of parameters $(\chi,u_1,u_2,\dotsc,u_m)$ with $\chi$ a linear character of the Borel subgroup $B$ and $u_i\in k$.
	\item There is a bijective correspondence between the set of irreducible $kG$--modules and the set of such characters $\psi$.
\end{itemize}

One hope for studying the difficult problem of understanding representations of $S_n$ over $k$ \cite{Kleshchev} is to study the Hecke algebra $\cH_n(k):=L_k(P_n\setminus S_n/P_n)$. Understanding the number of double cosets (i.e. the $k$-dimension of $\cH_n(k)$) and their sizes seems like a natural first step. 
It is worth mentioning that the representation theory of the algebra $\cH_n(k)$ is closely related to the decomposition  of the permutation character $(\triv_{P_n})^{S_n}$ into irreducible constituents. (We refer the reader to \cite[Chapter 11D]{CR81} for the complete definition and properties of this correspondence.) Exploiting this connection, the exact number of irreducible representations of $\cH_n(k)$ has been computed in \cite[Corollary B]{GL1} for any field $k$ of odd characteristic.

Alas, our main results show that most double cosets have the same size, so that `size' does not usefully distinguish them. Still, it does give a good hold on dimension.

\subsection{Outline}

\Cref{sec:background} gives background on $P_n$ and double coset enumeration. Furthermore, we prove that all possible sizes occur for $n\ge 9$. \Cref{sec:abelian} studies a special case when $n=kp$ and $1\le k\le p-1$. Then, sharp formulas and asymptotics are available. It may be read now for further motivation. \Cref{sec:4} deduces \cref{thm:1}. Our proof uses a result on random generation of $A_n$ due to Eberhard--Garzoni \cite{EG21,EG22}. \Cref{sec:new5} develops a complete understanding of double cosets of size $p|P_n|$ for all $p$. It also contains useful facts for size $p^k|P_n|$ for general $k$. The final section contains remarks and open problems.

\medskip

\subsubsection*{Acknowledgements}
We thank Michael Geline, Michael Howes, Marty Isaacs, Markus Lincklemann, Radha Kessar, John Murray and Richard Stanley for many interesting comments and helpful discussions. P.D. acknowledges support from NSF grant 1954042. E.G. acknowledges support from the European Union - Next Generation EU, M4C1, CUP B53D23009410006, PRIN 2022.
\bigskip
\section{Background}\label{sec:background}
\subsection{Sylow subgroups of $S_n$}\label{sec:sn-sylow}
The Sylow $p$-subgroups of $S_n$ were first determined in \cite{Kaloujnine}. This is also sometimes attributed to Cauchy, see \cite{Meo}. The facts below are standard, see \cite{OlssonBook} or \cite{JK}; see also \cite{Wildon} for an alternative description.

The largest power of $p$ dividing $n!$ is $p^m$ where $m$ is given as follows:
\begin{equation}\label{eq:m}
	m=\sum_{a=1}^\infty\left\lfloor\frac{n}{p^a}\right\rfloor = \frac{n-D_p(n)}{p-1}.
\end{equation}
Here, if $n=\sum_{i=0}^\infty a_ip^i$ is the $p$-adic expansion of $n$, i.e.~$a_i\in\{0,1,\dotsc,p-1\}$ for all $i$, then $D_p(n)=\sum_{i=0}^\infty a_i$ is the sum of the digits (this was known to Legendre; see \cite{Leg}, whose first edition was published in 1798).

For example, if $n=p^2$ then $m=\frac{p^2-1}{p-1}=p+1$, so $|P_n|=p^{p+1}$. For this case, $P_n$ may be pictured as $C_p^p\rtimes C_p\cong C_p\wr C_p$. When $p=3$, we have the following diagram:
\begin{center}
	\begin{tikzpicture}[scale=1.0, every node/.style={scale=0.7}]
		\draw (0,0.2) node(R) {$\bullet$};
		\draw (-2,-0.6) node(1) {$\bullet$};
		\draw (0,-0.6) node(2) {$\bullet$};
		\draw (2,-0.6) node(3) {$\bullet$};
		\draw (R) -- (1);
		\draw (R) -- (2);
		\draw (R) -- (3);
		\draw (-2.6,-1.5) node(11) {$\bullet$};
		\draw (-2,-1.5) node(12) {$\bullet$};
		\draw (-1.4,-1.5) node(13) {$\bullet$};
		\draw (1) -- (11);
		\draw (1) -- (12);
		\draw (1) -- (13);
		\draw (-0.6,-1.5) node(21) {$\bullet$};
		\draw (0,-1.5) node(22) {$\bullet$};
		\draw (0.6,-1.5) node(23) {$\bullet$};
		\draw (2) -- (21);
		\draw (2) -- (22);
		\draw (2) -- (23);
		\draw (1.4,-1.5) node(31) {$\bullet$};
		\draw (2,-1.5) node(32) {$\bullet$};
		\draw (2.6,-1.5) node(33) {$\bullet$};
		\draw (3) -- (31);
		\draw (3) -- (32);
		\draw (3) -- (33);
	\end{tikzpicture}
\end{center}

\noindent Here, each of the $p$ copies of $C_p$ in the base group acts cyclically on each set of leaves, and the wreathing $C_p$ permutes the $p$ branches from the root cyclically. Iterating this construction, for $n=p^k$ we have that
\begin{equation}
	P_n = \underbrace{C_p \wr C_p \wr \cdots \wr C_p}_{k\text{ times}},\qquad |P_n|=p^{1+p+\cdots+p^{k-1}}.
\end{equation}
For general $n$, write $n=\sum_{j=0}^\infty a_jp^j$ where $0\le a_j\le p-1$ for each $j$. Then $P_n$ is isomorphic to the direct product of $a_j$ copies of the Sylow $p$-subgroup of $S_{p^j}$ taken over all $j\ge 0$. A useful fact, needed below, is
\begin{equation}\label{eq:normaliser}
	N_{S_n}(P_n) \cong \prod_j N_{S_{p^j}}(P_{p^j}) \wr S_{a_j}
\end{equation}
where $N_{S_{p^j}}(P_{p^j}) =P_{p^j}\rtimes (C_{p-1})^j$. A careful version of the isomorphism and further details can be found in \cite[Section 2B]{Giannelli} where it is applied to give explicit McKay bijections for $S_n$ and $A_n$. 

\subsection{Double cosets for $H\times K$ acting on $G$}
Let $G$ be a finite group and $H$ and $K$ be subgroups of $G$. The set of orbits of $H\times K$ acting on $G$ by $(h,k)\cdot g = hgk^{-1}$ is denoted $H\setminus G/K$, the set of $(H,K)$-double cosets of $G$. For a textbook treatment, see \cite[p.23]{Suzuki}. The Orbit--Stabiliser theorem implies
\begin{equation}\label{eq:dc-size}
	|HxK| = \frac{|H||K|}{|H\cap xKx^{-1}|}
\end{equation}
for $x\in G$. Here are three formulas for the number of double cosets: the first is an easy application of Orbit--Stabiliser; see for example \cite[Ex.~7.77]{Stanley} and \cite[p.44]{Curtis99} for the latter two.
\begin{subequations}
	\begin{align}
		|H\setminus G/K| &= \frac{1}{|H||K|} \sum_{h\in H,k\in K} |G_{hk}|,\quad\text{where } G_{hk}=\{g\mid hgk^{-1}=g\}, \\
		|H\setminus G/K| &= \sum_{\chi\in\Irr(G)} a(\chi)b(\chi) = \langle (\triv_H)^G,(\triv_K)^G \rangle \nonumber\\
		& \qquad\text{where }(\triv_H)^G=\sum_{\chi\in\Irr(G)}a(\chi)\chi,\ (\triv_K)^G=\sum_{\chi\in\Irr(G)}b(\chi), \label{eq:part-b}\\
		|H\setminus G/K| &= \frac{|G|}{|H||K|}\sum_{\cC}\frac{|\cC\cap H||\cC\cap K|}{|\cC|}\label{eq:part-c}
	\end{align}
	where $\cC$ runs over the conjugacy classes of $G$.
\end{subequations}
See \cite{Renteln} for applications of \eqref{eq:part-c} to $|P_n\setminus S_n/P_n|$: this paper contains a useful review for Sylow $p$-subgroups of $S_n$ with full proofs.

\begin{remark}
	In the case of $p=2$, the sequence of numbers $|P_n\setminus S_n/P_n|$ is recorded on the Online Encyclopedia of Integer Sequences as A360808 \cite{OEIS}, and the formula given there follows from \eqref{eq:part-b}. 
	
	To see this: $|P_n\setminus S_n/P_n|$ is calculated as $\langle U_n,U_n\rangle$ where Stanley defines the symmetric functions $U_n$ and $T_k$ as follows. Set $U_n = T_{a_0}T_{a_1}...T_{a_s}$, where $T_k$ is recursively defined by $T_0=p_1$ (power sum) and $T_k=h_2[T_{k-1}]$ (plethysm) for $k>0$. 
	Under the Frobenius characteristic isomorphism, $U_n$ corresponds to the symmetric group character $(\triv_{P_n})^{S_n}$, since $T_k$ corresponds to $(\triv_{S_2\wr S_2\wr \cdots \wr S_2})^{S_{2^k}}$. In this latter expression the trivial character is induced from a $k$-fold wreath product of cyclic groups of order 2, and such a group is exactly a Sylow 2-subgroup of $S_{2^k}$.
\end{remark}

All of these formulas were useful in our preliminary work, collecting examples. We have not seen how to use any of them to derive results for general $n$.

Despite these nice formulas, it is worth remembering that there are many examples where computing $|HxK|$ or $|H\setminus G/K|$ exactly is \#P-complete. See \cite{DSimper} or \cite{DMalliaris} for references and examples.

\subsection{Smallest and largest sizes}
Let $G$ be a finite group and $x\in G$. Take $H=K\le G$ in this section. From \eqref{eq:dc-size} we immediately observe that
\begin{equation}\label{eq:dc-size-range}
	|H|\le|HxH|\le |H|^2.
\end{equation}
Clearly the lower bound is attained at $x=1$. More generally, we observe that $|HxH|=|H|$ if and only if $xH=Hx$, that is, if and only if $x\in N_G(H)$.

\begin{lemma}\label{lem: minsize}
	For finite groups $H\le G$, the number of double cosets $HxH$ of size $|H|$ is $|N_G(H):H|$ and such double cosets are naturally labelled by $N_G(H)/H$.
\end{lemma}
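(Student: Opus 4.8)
The plan is to build directly on the observation recorded just above the statement: a double coset $HxH$ attains the minimal size $|H|$ if and only if $x\in N_G(H)$. So my first step is to restrict attention to representatives lying in $N_G(H)$ and then to count how many distinct double cosets they produce.

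The structural fact I would exploit is that for $x\in N_G(H)$ the double coset collapses to a single coset: since $x$ normalizes $H$ we have $xH=Hx$, whence $HxH=(Hx)H=(xH)H=xH$ and likewise $HxH=Hx$. Thus each size-$|H|$ double coset is literally a left coset (equivalently a right coset) of $H$ by an element of the subgroup $N_G(H)\ge H$, i.e.\ a coset of $H$ inside $N_G(H)$.

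I would then make the labelling explicit by sending a size-$|H|$ double coset $HxH$ (with $x\in N_G(H)$) to the coset $xH\in N_G(H)/H$. By the collapse above this assignment is visibly well defined, since the double coset simply equals $xH$, so its image does not depend on the chosen representative. To finish I would verify it is a bijection: injectivity holds because $xH=x'H$ forces $HxH=xH=x'H=Hx'H$, and surjectivity because every coset $xH$ of $H$ in $N_G(H)$ is the image of the double coset $HxH=xH$. Counting the cosets of $H$ in $N_G(H)$ then yields the value $|N_G(H):H|$.

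There is no real obstacle in this argument; the lemma is essentially the recorded size observation packaged together with the coset collapse. The only point requiring a little care is the coset bookkeeping — confirming that the passage $HxH\mapsto xH$ is a genuine bijection rather than merely a surjection, so that $N_G(H)/H$ labels the size-$|H|$ double cosets exactly and not just up to an upper bound on their number.
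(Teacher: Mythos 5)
Your proposal is correct and follows exactly the route the paper intends: the paper records the observation that $|HxH|=|H|$ if and only if $x\in N_G(H)$ immediately before the lemma and leaves the coset collapse $HxH=xH$ and the resulting bijection with $N_G(H)/H$ implicit, which is precisely what you have spelled out. No discrepancy.
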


\begin{remark}
	Lemma \ref{lem: minsize} shows that the set $N_G(H)/H$ naturally labels those double cosets in $H\setminus G/H$ of minimal size.  
	Despite this, it seems to be extremely difficult to find a labelling for all double cosets. For instance, 
	even for $P_p\le S_p$, we do not have a natural labelling of those double cosets of size $p^2$.
	When $p=2$, \eqref{eq:normaliser} shows that $|N_{S_n}(P_n):P_n|=1$, explaining the first column of \cref{tab:1}.
\end{remark}

\smallskip

On the other end of the spectrum, double cosets of maximal size need not always exist in general. For instance, if $H\trianglelefteq G$ then all double cosets are simply one-sided cosets, of minimal size $|H|$.
Nevertheless, the following result by Zenkov and Mazurov in \cite{ZM} settles the case for $P_n\setminus S_n/P_n$.

\begin{theorem}{\cite[Theorem 1]{ZM}}\label{thm:sylow-trivial-intersection}
	For $p$ a prime and $n$ a natural number, the symmetric group $S_n$ contains at least two Sylow $p$-subgroups with trivial intersection if and only if 
	\[ (p,n)\notin\{(3,3),(2,2),(2,4),(2,8)\}. \]
\end{theorem}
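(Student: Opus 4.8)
The statement is that $S_n$ contains two Sylow $p$-subgroups with trivial intersection precisely when $(p,n)$ avoids a short list. Equivalently, using \eqref{eq:dc-size}, we must decide when there exists $x \in S_n$ with $P_n \cap P_n^x = 1$, i.e.\ when a double coset of the \emph{maximal} size $|P_n|^2$ exists. The plan is to handle the two directions asymmetrically: the exceptional list is finite and can be verified by direct inspection (or computer check) that no trivial-intersection pair exists, so the real content is the forward direction---constructing such a pair for all $(p,n)$ outside the list.

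\emph{Reduction to prime powers.} The key structural input is the description of $P_n$ from \cref{sec:sn-sylow}: writing $n = \sum_j a_j p^j$ in base $p$, the group $P_n$ is a direct product of $a_j$ copies of $P_{p^j}$, acting on disjoint blocks of the $n$ points. First I would observe that if we can find trivially-intersecting Sylow subgroups on each homogeneous block $S_{p^j}$ (for every $j$ with $a_j \ge 1$, and handling the case $a_j \ge 2$ by permuting the equal-sized blocks), then taking the corresponding direct product over all blocks yields a pair $P_n, P_n^x$ in $S_n$ with trivial intersection, since the blocks are disjoint and any nontrivial element of the intersection would have to act nontrivially on some block. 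This reduces the problem to understanding trivial intersections of $P_{p^j} = C_p \wr \cdots \wr C_p$ inside $S_{p^j}$, where the wreath structure gives a concrete handle.

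\emph{The prime-power case.} For a single transitive block $S_{p^k}$ with $P := P_{p^k}$ a transitive $p$-group, the condition $P \cap P^x = 1$ can be attacked via a counting or probabilistic bound: estimate $|\{x : P \cap P^x \neq 1\}|$ by summing, over nontrivial $g \in P$, the number of $x$ with $g \in P^x$, i.e.\ $\sum_{1 \neq g \in P} |P| \cdot |\{h \in S_{p^k} : g^h \in P\}|/|P|$-type terms, and show this is strictly less than $|S_{p^k}|$ once $p^k$ is large enough. The dominant contributions come from elements $g$ of order $p$ fixing many points; the fixed-point structure of $P$ on $p^k$ points must be controlled. For the finitely many small remaining cases $(p, p^k)$ near the threshold, a direct construction or machine computation finishes the argument.

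\emph{Main obstacle.} I expect the hard part to be the odd borderline small cases and especially the $p=2$ tower, where the exceptional values $(2,2),(2,4),(2,8)$ show that trivial intersection genuinely fails for $n = 2, 4, 8$ but succeeds at $n=16$; the counting bound is wasteful exactly in this regime, so the threshold must be pinned down by an explicit element-by-element analysis of the small wreath products rather than asymptotics. The characteristic $2$ difficulty here is consistent with part (2) of \cref{thm:1}, which shows $p=2$ behaves differently, and I would anticipate that the cleanest route for these cases is to exhibit an explicit permutation $x$ (or prove none exists) by examining the fixed-point and cycle structure of involutions in $P_{16}$ versus $P_8$.
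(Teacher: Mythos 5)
First, a point of order: the paper does not prove this statement at all --- it is quoted verbatim from Zenkov--Mazurov \cite[Theorem 1]{ZM}, and the only in-paper commentary is the remark offering an alternative derivation for $p\notin\{2,3\}$ via Granville--Ono defect-zero blocks together with Green's theorem. So your proposal is not competing with an argument in the paper but with the (nontrivial) original proof. Judged on its own terms, it has two genuine gaps. The first is the reduction to prime powers. If $P=P_1\times\cdots\times P_s$ and $Q=Q_1\times\cdots\times Q_s$ are built on the \emph{same} block partition, then indeed $P\cap Q=\prod_i(P_i\cap Q_i)$, but this only gives trivial intersection when \emph{every} block individually admits a trivially intersecting pair --- and the exceptional list says exactly that blocks of size $2,4,8$ (for $p=2$) and $3$ (for $p=3$) never do. For $n=6$, $p=2$ the blocks have sizes $4$ and $2$ (distinct, so permuting equal-sized blocks does not help), yet $(2,6)$ is not exceptional: the witnessing conjugate must use a \emph{different} partition of $\{1,\dots,6\}$ into a $4$-set and a $2$-set. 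The same failure occurs for $p=3$, $n=12=9+3$. Your reduction as stated cannot produce these cases; compare how the proof of \cref{thm:all-sizes} in the paper has to treat exactly these block interactions by hand.

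The second gap is the counting bound in the prime-power case for $p=2$. Your union bound amounts to showing
\[
\sum_{\cC\neq\{1\}}\frac{|\cC\cap P|^2}{|\cC|}<1,
\]
and by \eqref{eq:part-c} the left-hand side equals $\tfrac{|P|^2}{|G|}\,|P\backslash G/P|-1$. For $n=16$, $p=2$, \cref{tab:1} gives $40384$ double cosets while $2\cdot 16!/|P_{16}|^2\approx 38972$, so the sum is about $1.07>1$ and the bound fails at the very first non-exceptional power of $2$. More structurally, \cref{thm:1}(b) shows the probability of nontrivial intersection is bounded away from $0$ for $p=2$, so a first-moment argument is at best borderline for \emph{all} $2$-powers, not merely a "finitely many small remaining cases" issue; you cannot defer the entire $p=2$ tower to machine computation. (For odd $p$ the counting idea is plausible --- e.g.\ for $n=p$ the sum is $(p-1)/(p-2)!<1$ exactly when $p\ge 5$, matching the exceptional case $(3,3)$ --- and the verification of the four exceptional cases is indeed routine, e.g.\ every Sylow $2$-subgroup of $S_4$ contains the normal Klein four-group.) To repair the proof you would need either the explicit constructions of Zenkov--Mazurov or, for $p\ge 5$, the defect-zero route sketched in the paper's remark.
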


\begin{remark}
	Another way to see this when $p\notin\{2,3\}$ is to use a theorem of Granville--Ono \cite{GO}, which asserts that for $p\notin\{2,3\}$ and all $n$, $S_n$ admits $p$-blocks with trivial defect group. This implies the existence of $P\in\Syl_p(S_n)$ and $x\in S_n$ such that $P\cap P^x=1$, using a theorem of Green \cite[Corollary 4.21]{NavarroBook}.
	
	Indeed, following a suggestion of Radha Kessar, by considering $p$-defect zero characters we can also obtain a first bound on the number of Sylow-$p$ double cosets of $S_n$ of maximal size. Let $X$ denote the set of irreducible characters of $S_n$ of $p$-defect 0 (under the natural bijection between $\Irr(S_n)$ and the set $\cP(n)$ of partitions of $n$, such characters are labelled by $p$-core partitions). Then the number of $(P_n,P_n)$-double cosets of $S_n$ of size $|P_n|^2$ is at least $\sum_{\chi\in X} (\chi(1)_{p'})^2$, where if $m\in\N$ then $m_{p'}$ denotes its $p'$-part. This follows from the fact that a defect zero block is projective as a $kP_n$-$kP_n$-bimodule.
\end{remark}

The next section treats double cosets of intermediate sizes. As mentioned in the introduction, our first main result (\cref{thm:all-sizes}) extends Theorem \ref{thm:sylow-trivial-intersection} by showing that $P_n\setminus S_n/P_n$ admits double cosets of any possible size (modulo a few exceptions when $n<9$).

\subsection{Intermediate sizes}
Let $n$ be a natural number and $p$ a prime. Let $P$ denote a Sylow $p$-subgroup of $S_n$ and $x\in S_n$.
Since $|PxP|=|P|/|P:P\cap xPx^{-1}|$ from \eqref{eq:dc-size}, it is easy to see that $|PxP|$ must be a $p$-power between $|P|$ and $|P|^2$. Moreover, it turns out that all possible sizes can occur for such Sylow-$p$ double cosets.
In order to prove this statement, we first need the following technical lemma. 

\begin{lemma}\label{lem:sylow-2}
	Let $k\in\N$ with $k\ge 2$ and $n=2^k+1$. Then there exist $P,Q\in\Syl_2(S_n)$ such that $|P\cap Q|=2$ and $P$ and $Q$ have no common fixed point. 
\end{lemma}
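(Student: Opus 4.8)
The plan is to realise both Sylow subgroups as automorphism groups of complete binary trees and to force their intersection down to a single transposition. Since $n=2^k+1$ has binary digits $a_0=a_k=1$ (all others $0$), the description of $P_n$ in \cref{sec:sn-sylow} together with \eqref{eq:normaliser} shows that a Sylow $2$-subgroup of $S_n$ is a Sylow $2$-subgroup $W_k:=C_2\wr\cdots\wr C_2$ ($k$ factors) of $S_{2^k}$, acting on $2^k$ of the points and fixing the last one. So I fix the complete binary tree $T$ with leaf set $L=\{1,\dots,2^k\}$, put $P=\operatorname{Aut}(T)$ (which fixes $\star:=2^k+1$), and take as candidate common involution the swap $t=(1\,2)$ of a bottom edge $\{1,2\}$ of $T$. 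The goal is to build a second tree $T'$ on a leaf set $L'=(L\setminus\{\ell_0\})\cup\{\star\}$, obtained by exchanging one leaf $\ell_0\in\{3,\dots,2^k\}$ for $\star$, so that $\{1,2\}$ remains a bottom edge of $T'$ (hence $t\in Q:=\operatorname{Aut}(T')$) while $T$ and $T'$ are otherwise as transverse as possible. Then $P$ fixes only $\star$, $Q$ fixes only $\ell_0$, and $\ell_0\neq\star$, so $P$ and $Q$ have no common fixed point.

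The key reduction is that any $g\in P\cap Q$ must fix $\star$ (as $g\in P$) and $\ell_0$ (as $g\in Q$); hence, with $M=L\cap L'$ of size $2^k-1$,
\[
	P\cap Q \;=\; \operatorname{Stab}_P(\ell_0)\cap\operatorname{Stab}_Q(\star)\;=:\;A\cap B ,
\]
where $A$ (resp.\ $B$) is the internal direct product of the automorphism groups of the sibling subtrees along the path from the root of $T$ to $\ell_0$ (resp.\ of $T'$ to $\star$), of leaf-sizes $2^{k-1},2^{k-2},\dots,2,1$. Thus both $A$ and $B$ are ``combs'' $W_{k-1}\times W_{k-2}\times\cdots\times W_1$ acting on $M$, and it remains to choose $T'$ so that $A\cap B=\langle t\rangle$, which has order $2$. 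It is worth noting that the naive choice $Q=P^{(\star\,\ell_0)}$ fails: conjugating by the transposition $(\star\,\ell_0)$ yields $B=A$, whence $P\cap Q=\operatorname{Stab}_P(\ell_0)$, which has order $2^{2^k-k-1}$ and is far too large once $k\ge 3$. So $T'$ must genuinely differ from $T$ on $M$.

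I would construct $T'$ by recursion on $k$, exploiting the self-similarity $W_{k-1}=W_{k-2}\wr C_2$, and verify $A\cap B=\langle t\rangle$ by a cascade argument. The mechanism is already visible for $k=3$ ($n=9$): with $\ell_0=8$, $\star=9$, one has $A=W_2^{\{1,2,3,4\}}\times W_1^{\{5,6\}}$ (fixing $7$), and one may take $T'$ so that $B=W_2^{\{1,2,4,7\}}\times W_1^{\{3,6\}}$ (fixing $5$), where the superscripts record leaf sets and each $W_2$-tree has bottom edges $\{1,2\},\{3,4\}$, resp.\ $\{1,2\},\{4,7\}$. Then $g\in A\cap B$ must fix $5$, killing the $W_1^{\{5,6\}}$-component; it then fixes $6$, killing $W_1^{\{3,6\}}$; it then fixes $7$, which confines the $W_2^{\{1,2,4,7\}}$-component to the leaf-stabiliser $\langle(1\,2)\rangle$; and $(1\,2)$ does lie in both combs, so $A\cap B=\langle t\rangle$. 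In general the small factors are placed in an interlocking chain of this type, while the two top factors $W_{k-1}^X$ and $W_{k-1}^Y$ (with $|X|=|Y|=2^{k-1}$) overlap in a large set; restricted to that overlap, the condition that $g$ preserve both trees is exactly a smaller instance of the whole problem --- two $W_{k-1}$-trees on leaf sets differing in a single leaf and sharing the edge $\{1,2\}$ --- which is disposed of by the inductive hypothesis. The base case $k=2$ is immediate, since there $\operatorname{Stab}_P(\ell_0)$ already has order $2$, so any $Q$ with $\{1,2\}$ a bottom edge works.

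The main obstacle is precisely this last construction: arranging the second tree so that the interlocking cascade of the small sibling subtrees, together with the inductively transverse top subtrees, leaves no common element beyond $t$. The delicate point is the large factor $W_{k-1}$, which a single fixed point only cuts down to $W_{k-2}\times\cdots\times W_1$ rather than to the identity; taming it forces the self-similar recursion instead of a one-step support count, and the bookkeeping of which leaf is fixed by which comb --- the data that drives the cascade --- is where the real work lies.
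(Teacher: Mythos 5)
Your reduction of $P\cap Q$ to $\operatorname{Stab}_P(\ell_0)\cap\operatorname{Stab}_Q(\star)$, the identification of both stabilisers as combs $W_{k-1}\times W_{k-2}\times\cdots\times W_1$, and the worked case $k=3$ are all correct. But the argument stops exactly where the difficulty begins: for $k\ge 4$ you never construct $T'$, and the two ingredients you invoke are not actually in place. First, the ``interlocking chain'' for the small factors is not a finite cascade of the $k=3$ type: the factor $W_{k-2}$ (and then $W_{k-3}$, and so on) suffers from precisely the defect you diagnose for $W_{k-1}$ --- a single forced fixed point cuts it down only to a comb $W_{k-3}\times\cdots\times W_1$, not to the identity --- so the small factors need their own recursive statement (two combs on $(2^{k-1}-1)$-point sets differing in one element, with \emph{trivial} intersection), which is a second induction, distinct from the lemma, that you neither formulate nor prove. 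Second, the reduction of the top factors to ``a smaller instance of the whole problem'' requires the single tree $T'$ to simultaneously restrict on $Y$ to the inductively constructed tree \emph{and} have its small sibling subtrees realise the interlocking chain; no argument is given that these constraints are compatible. (One simplification you missed: since any $g\in A\cap B$ preserves both $X$ and $Y$ and $|X\setminus Y|=|Y\setminus X|=1$, it fixes $x_0$ and $y_0$ automatically, so no cascade is needed to reach the top factors.) As written this is a plan plus one example --- you concede as much --- and that is a genuine gap.

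For contrast, the paper's proof avoids this bookkeeping entirely. It checks $k\le 4$ by computer, and for $k\ge 5$ splits $\{1,\ldots,n\}$ into blocks of sizes $2^{k-1}$ and $2^{k-1}+1$: on the first block, Theorem~\ref{thm:sylow-trivial-intersection} (Zenkov--Mazurov) supplies $P_1,Q_1\in\Syl_2$ with $P_1\cap Q_1=1$ (this is why $k\ge 5$, so that $2^{k-1}\notin\{2,4,8\}$); on the second, induction supplies $P_2,Q_2$ with $|P_2\cap Q_2|=2$ and distinct fixed points; then $P=P_1P_2\langle x\rangle$ and $Q=Q_1Q_2\langle y\rangle$ for suitable involutions interchanging the blocks. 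The distinct fixed points force every element of $P\cap Q$ to preserve the block decomposition, whence $P\cap Q=P_2\cap Q_2$. If you want to salvage the tree-theoretic route, you must state and prove the auxiliary trivial-intersection lemma for the small-factor combs alongside the main induction; alternatively, importing Theorem~\ref{thm:sylow-trivial-intersection} as the paper does disposes of an entire half of the leaf set in one stroke.
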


\begin{proof}
	The case $k\le 4$ can be checked using \cite{GAP48}. Now assume $k\ge 5$. Let $n_1=2^{k-1}$, $Y_1=\Sym(\{1,\ldots,n_1\})$ and $n_2=2^{k-1}+1$, $Y_2=\Sym(\{n_1+1,\ldots,n\})\cong S_{n_2}$. By Theorem \ref{thm:sylow-trivial-intersection}, there exist $P_1,Q_1\in\Syl_2(Y_1)$ such that $P_1\cap Q_1=1$. By induction there exist $P_2,Q_2\in\Syl_2(Y_2)$ such that $|P_2\cap Q_2|=2$ and $P_2$ and $Q_2$ have no common fixed point. Observe that $P_i\cong Q_i$. Take involutions $x,y\in S_n$ such that $P_2=P_1^x$ and $Q_2=Q_1^y$. The claim holds for $P=P_1P_2\langle x\rangle$ and $Q=Q_1Q_2\langle y\rangle$. 
\end{proof}

\begin{theorem}\label{thm:all-sizes}
	Let $n\in\N$ and $p$ be a prime. Let $p^m$ be the $p$-part of $n!$. Then for every $k\in\{0,1,\dotsc,m\}$ there exist $P,Q\in\Syl_p(S_n)$ such that $|P\cap Q|=p^k$, if and only if
	\[ (n,p,k)\notin\{(2,2,0),(4,2,0),(4,2,1),(8,2,0),(3,3,0),(6,3,1) \}. \]
\end{theorem}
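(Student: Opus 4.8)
The plan is to establish the two directions separately. For the ``only if'' direction, I would first dispose of the finite list of exceptions by a direct check (e.g.\ via \cite{GAP48}), verifying that in each listed case the claimed intersection size $p^k$ genuinely fails to occur. The structural reason behind these exceptions is that when $P_n$ is small or highly constrained, not every $p$-power between $p^0$ and $p^m$ is realisable as $|P\cap P^x|$; for instance in the cases $(n,p)=(2,2),(4,2),(8,2)$ we are exactly in the regime of \cref{thm:sylow-trivial-intersection} where $k=0$ is forbidden, and the remaining entries $(4,2,1),(3,3,0),(6,3,1)$ should each be confirmed computationally.

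For the ``if'' direction, which is the substance of the theorem, I would build the desired pair $(P,Q)$ by induction on $n$, constructing Sylow subgroups whose intersection has controlled size. The two endpoints $k=m$ and $k=0$ are already available: the maximal intersection $k=m$ comes from taking $P=Q$, and the minimal intersection $k=0$ (trivial intersection, $p^0=1$) is precisely \cref{thm:sylow-trivial-intersection} away from its exceptional list. The main work is to interpolate all intermediate values $0<k<m$. The natural device is the direct-product / fixed-point decomposition reflected in the structure of $P_n$ (see \eqref{eq:normaliser} and \cref{sec:sn-sylow}): I would split $\{1,\dots,n\}$ into blocks, place on one block a pair of Sylow subgroups with a prescribed smaller intersection (supplied by induction or by \cref{thm:sylow-trivial-intersection}), and on the complementary block a pair of \emph{equal} Sylow subgroups contributing the remaining factor of $p$ to the intersection. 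Taking direct products then multiplies the intersection sizes, so that a trivial-intersection piece of the right size together with a common piece yields exactly $p^k$. \Cref{lem:sylow-2} is the prototype of this gluing argument in the delicate case $n=2^k+1$, $p=2$, where trivial intersection is unavailable on a power-of-two block and one must instead engineer an intersection of size exactly $2$ using involutions $x,y$ conjugating the two halves.

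The hard part will be organising the induction so that \emph{every} residue of $n$ modulo powers of $p$ is handled uniformly and so that the small base cases do not collide with the excluded triples. Concretely, for a target $k$ one must choose the block decomposition $n=n_1+n_2$ (with $n_1,n_2$ chosen according to the $p$-adic digits of $n$) so that the induced Sylow subgroups admit an intersection of the desired size on one factor while the other factor is forced to coincide; ensuring the chosen $n_1$ avoids the finitely many exceptional values of \cref{thm:sylow-trivial-intersection}, and separately treating $p=2$ where those exceptions are most numerous, is where the casework concentrates. I expect the argument to reduce, after the inductive setup, to a bookkeeping check that the attainable set of intersection exponents is closed under the relevant additions and covers $\{0,1,\dots,m\}$ with exactly the stated exceptions removed.
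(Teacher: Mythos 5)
Your overall frame --- GAP for the exceptional triples and small $n$, the endpoints $k=0$ (via \cref{thm:sylow-trivial-intersection}) and $k=m$ (via $P=Q$), induction on $n$ through a block decomposition, and \cref{lem:sylow-2} as the gluing device for $n=2^k+1$ --- matches the paper. But there is a genuine gap at the heart of the argument: the case where $n=p^\ell$ is a prime power. There, $P_n\cong P_{p^{\ell-1}}\wr C_p$ is an iterated wreath product and admits \emph{no} direct-product decomposition over a partition of $\{1,\dots,n\}$, so the step ``taking direct products then multiplies the intersection sizes'' has nothing to act on. Since every other $n$ is built from prime-power blocks, your induction bottoms out exactly where your construction gives nothing; note also that even in the honest direct-product case, realising a general $k$ forces you to produce \emph{intermediate} intersections inside a single prime-power block, so the wreath case cannot be avoided.

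Concretely, if you partition $\{1,\dots,p^\ell\}$ into $p$ blocks of size $p^{\ell-1}$ and choose $Q\supseteq Q_1\times\cdots\times Q_p$, the identity $P\cap Q=(P_1\cap Q_1)\times\cdots\times(P_p\cap Q_p)$ can fail: elements of $P\cap Q$ may permute the blocks nontrivially. The paper kills this by additionally imposing $|P_1\cap Q_1|\neq|P_2\cap Q_2|$, which forces any $y\in P\cap Q$ to act non-transitively, hence trivially, on the blocks. This asymmetry trick is unavailable when $k=m-1$ (the base group already has index $p$ in $P$, so all $P_i\cap Q_i=P_i$), and for that single value the paper switches to a different decomposition into $n/p$ blocks of size $p$, placing the defect in the top group by choosing $T_1,T_2\in\Syl_p(S_{n/p})$ with $|T_1:T_1\cap T_2|=p$ and setting $P=B\rtimes T_1$, $Q=B\rtimes T_2$. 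Neither device appears in your sketch, and without them the interpolation of the intermediate values $0<k<m$ does not go through. (A further, smaller omission: $n=16$, $p=2$, $k=1$ resists both constructions and is handled computationally in the paper.)
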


\begin{proof}
	The case $n\le 10$ can be checked with \cite{GAP48}. Now assume $n\ge 11$.
	The case $k=0$ follows from Theorem \ref{thm:sylow-trivial-intersection}. For $k=m$ we can choose $P=Q$. Thus, let $1\le k<m$ and $P\in\Syl_p(S_n)$. We argue by induction on $n$.  
	
	\textbf{Case~1:} Let us first assume that $n=p^\ell$ is a prime power.\\ 
	Let $m(\ell)\in\N$ be such that $|P|=p^{m(\ell)}$. 
	The description of the algebraic structure of the Sylow $p$-subgroups of symmetric groups implies that 
	$m(\ell)=p\cdot m(\ell-1)+1$, for any $\ell\geq 2$.
	Since $0<k<m(\ell)$, we have $\ell\geq 2$. 
	We partition $\{1,\ldots,n\}=N_1\cup\ldots\cup N_p$ where $N_i:=\{n(i-1)/p+1,\ldots,ni/p\}$ for $i=1,\ldots,p$. Let $Y_i:=\Sym(N_i)$ and $P_i\in\Syl_p(Y_i)$ for each $i$. 
	Let $x\in S_n$ be an element of order $p$ which permutes the $P_i$ cyclically, e.g.~$P_{i+1}=P_i^x$ for $i=1,\ldots,p-1$. 
	Then $P:=P_1\ldots P_p\langle x\rangle\cong P_1\wr C_p$ is a Sylow $p$-subgroup of $S_n$. 
	Suppose first that $n\ne 16$. Since $n\ge 11$ and $k<m(\ell)-1=p\cdot m(\ell-1)$ we can choose inductively $Q_i\in\Syl_p(Y_i)$ such that 
	\[ |(P_1\cap Q_1)\times\ldots\times(P_p\cap Q_p)|=p^k\ \ \text{and such that}\ \ |P_1\cap Q_1|\neq |P_2\cap Q_2|. \]
	Let $Q\in\Syl_p(S_n)$ be such that $Q_1\times\cdots\times Q_p\leq Q$. For any $y\in P\cap Q$ we have that $\langle y\rangle$ acts on $\{P_1,\ldots, P_p\}$ and on $\{Q_1,\ldots, Q_p\}$. Since $\{N_1,\ldots, N_p\}$ is a system of imprimitivity for both $P$ and $Q$ we have that $P_i^y=P_j$ if and only if $Q_i^y=Q_j$. It follows that $\langle y\rangle$ acts on the set $\Omega:=\{P_1\cap Q_1, P_2\cap Q_2, \ldots, P_p\cap Q_p\}$. Since $|P_1\cap Q_1|\neq |P_2\cap Q_2|$ the action is not transitive. We deduce that $\langle y\rangle$ acts trivially on $\Omega$ and hence that $P_i^y=P_i$ and 
	$Q_i^y=Q_i$. This shows that $y\in Y_1\times\cdots\times Y_p$ and therefore that $P\cap Q=(P_1\cap Q_1)\times\ldots\times(P_p\cap Q_p)$. We conclude that $|P\cap Q|=p^k$, as desired. 
	
	Next suppose $k=m(\ell)-1$. Here we choose a different partition $\{1,\ldots,n\}=N_1\cup\ldots\cup N_{n/p}$ where $N_i:=\{(i-1)p+1,\ldots,ip\}$ for $i=1,\ldots,n/p$. 
	Let $Y_i=\Sym(N_i)\cong S_p$ and $P_i\in\Syl_p(Y_i)$. 
	We let $B:=P_1\times P_2\times\cdots\times P_{n/p}$ and $S\cong S_{n/p}$ be a fixed subgroup of $N_{S_n}(B)$ permuting the sets $N_1,\ldots, N_{n/p}$. 
	Since $11\le n\ne 16$, using the inductive hypothesis we can find $T_1,T_2\in\Syl_p(S)$ such that $|T_1:T_1\cap T_2|=p$. Now let $P=B\rtimes T_1$ and $Q=B\rtimes T_2$. It is clear that $P,Q\in\mathrm{Syl}_p(S_n)$.  
	Setting $Z:=B(T_1\cap T_2)$ we notice that $Z\leq P\cap Q\leq P$ and that $|P:Z|=|T_1:T_1\cap T_2|=p$. It follows that $Z=P\cap Q$. In fact, assuming for a contradiction that $Z\neq P\cap Q$, we would have $P\cap Q=P$ and hence that $T_1\leq Q\cap S=BT_2\cap S=T_2$ which is a contradiction. 
	We conclude that $|P\cap Q|=|Z|=|B(T_1\cap T_2)|=p^{m(\ell)-1}$, as desired. 
	
	Finally, to conclude Case 1 we consider $n=16$ and $p=2$. Here we can argue similarly unless $k=1$. Fortunately, it turns out that most Sylow intersections are small. We have verified this special case by choosing random Sylow $2$-subgroups by computer.
	
	\textbf{Case~2:} $n$ is not a $p$-power.\\
	Let $n_1<n$ be the largest $p$-power $\le n$. Let $N_1:=\{1,\ldots,n_1\}$ and $N_2:=\{n_1+1,\ldots,n\}$. 
	Let $Y_1:=\Sym(N_1)$ and $Y_2:=\Sym(N_2)$. 
	Then we may assume that $P=P_1\times P_2$ where $P_1\in\Syl_p(Y_1)$ and $P_2\in\Syl_p(Y_2)$. If $p\ge 5$, we can choose $Q_1\in\Syl_p(Y_1)$ and $Q_2\in\Syl_p(Y_2)$ inductively such that $Q:=Q_1\times Q_2\in\Syl_p(S_n)$ with
	\[|P\cap Q|=|P_1\cap Q_1||P_2\cap Q_2|=p^k.\] 
	For $p=3$, we can argue as above except the case $n-n_1\in\{3,6\}$ with $k=1$ needs attention. However, here we can choose $Q_1\in\Syl_3(Y_1)$ such that $|P_1\cap Q_1|=1$, because $n_1\ge 9$. Hence, we may now assume that $p=2$.
	Only the cases $k\in\{1,2\}$ are problematic. If $n=12$, we can find $P,Q$ by computer random generation. Consequently, we may assume that $n\ge 17$. Now only the case $n-n_1=4$ and $k=1$ is left. 
	Let $N_1:=\{1,\ldots,n_1+1\}$. By \cref{lem:sylow-2}, there exist $P_1,Q_1\in\Syl_2(\Sym(N_1))$ such that $|P_1\cap Q_1|=2$ and $P_1$ and $Q_1$ have no common fixed point. Without loss of generality, we may assume that $P_1$ fixes $n_1+1$ and $Q_1$ fixes $n_1$. Let $N_2:=\{n_1+1,\ldots,n\}$ and $N_2':=\{n_1,n_1+2,n_1+3,n\}$. It is easy to find $P_2\in\Syl_2(\Sym(N_2))$ and $Q_2\in\Syl_2(\Sym(N_2'))$ with $P_2\cap Q_2=1$. Now $P=P_1\times P_2$ and $Q=Q_1\times Q_2$ are Sylow $2$-subgroups of $S_n$ with $|P\cap Q|=2$. 
\end{proof}

\begin{corollary}\label{cor: allsizes}
	For $n\ge 9$, all possible Sylow-$p$ double coset sizes occur in $S_n$.
\end{corollary}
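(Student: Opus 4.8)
The plan is to read off \cref{cor: allsizes} directly from \cref{thm:all-sizes} via the Sylow conjugacy dictionary, so that the proof is a bookkeeping translation rather than new input. First I would fix $P_n\in\Syl_p(S_n)$ and write $|P_n|=p^m$. By \eqref{eq:dc-size} every double coset has size $|P_n x P_n|=p^{2m}/|P_n\cap P_n^x|$, and by \eqref{eq:dc-size-range} this is a power of $p$ between $p^m$ and $p^{2m}$. Writing $|P_n\cap P_n^x|=p^k$, the realised double coset sizes are exactly the $p^{2m-k}$ for those $k\in\{0,1,\dotsc,m\}$ that actually occur; hence ``all possible sizes occur'' is equivalent to the assertion that every $k\in\{0,1,\dotsc,m\}$ is attained by some $x\in S_n$.

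Next I would convert the existential statement of \cref{thm:all-sizes} -- which produces a pair $P,Q\in\Syl_p(S_n)$ with $|P\cap Q|=p^k$ -- into a statement about the fixed $P_n$. Since all Sylow $p$-subgroups are conjugate, choosing $g\in S_n$ with $P^g=P_n$ gives $|P_n\cap Q^g|=|(P\cap Q)^g|=p^k$ with $Q^g\in\Syl_p(S_n)$; writing $Q^g=P_n^x$ yields $x\in S_n$ with $|P_n\cap P_n^x|=p^k$. Thus the set of realised intersection orders $\{\,|P_n\cap P_n^x|:x\in S_n\,\}$ coincides with $\{\,|P\cap Q|:P,Q\in\Syl_p(S_n)\,\}$, and realising every $k$ is precisely what \cref{thm:all-sizes} provides.

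Finally I would dispose of the exceptions. \cref{thm:all-sizes} guarantees such a pair for every $k\in\{0,\dotsc,m\}$ exactly when $(n,p,k)$ avoids the list $\{(2,2,0),(4,2,0),(4,2,1),(8,2,0),(3,3,0),(6,3,1)\}$. Every exceptional triple has $n\in\{2,3,4,6,8\}$, all strictly below $9$, so for $n\ge 9$ no exception occurs for any prime $p$ or any $k$ (this also covers the trivial primes $p>n$, where $m=0$ and only the single size $p^0$ is possible). Hence every $k\in\{0,\dotsc,m\}$ is attained and all sizes $p^m,p^{m+1},\dotsc,p^{2m}$ appear in $P_n\setminus S_n/P_n$.

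There is no genuine obstacle here beyond handling the conjugation step carefully and confirming that the entire exceptional list lives below $n=9$; all the substantive work has already been carried out in \cref{thm:all-sizes} (and, for the case $k=0$, in \cref{thm:sylow-trivial-intersection}).
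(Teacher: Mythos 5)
Your proposal is correct and matches the paper's (implicit) argument: the corollary is stated as an immediate consequence of \cref{thm:all-sizes}, obtained exactly by the dictionary $|P_nxP_n|=|P_n|^2/|P_n\cap P_n^x|$ from \eqref{eq:dc-size}, the conjugacy of Sylow subgroups, and the observation that every exceptional triple has $n\le 8$. Your write-up simply makes these routine translation steps explicit.
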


In \cref{sec:new5}, we further investigate the double cosets of second smallest possible size.

\bigskip
\section{Abelian Sylow $p$-subgroups}\label{sec:abelian}

For $n=kp$ with $k\in\{1,2,\dotsc,p-1\}$, \cref{sec:sn-sylow} shows that $P_n\cong(C_p)^k$, where, as before, $P_n$ denotes a Sylow $p$-subgroup of $S_n$. Now $p^k\le|P_nxP_n|\le p^{2k}$, and the arguments below show that for $p>2$:
\begin{itemize}
	\item All values $p^a$ with $k\le a\le 2k$ occur as sizes of $(P_n,P_n)$-double cosets.
	\item Almost all double cosets are of size $p^{2k}$, so the total number of double cosets is asymptotically $\frac{(kp)!}{p^{2k}}$.
	\item For $p$ large, the number $n_a$ of double cosets of size $p^a$ is super-exponentially increasing from $n_k=\frac{(p(p-1)^k)\cdot k!}{(pk)!}$ to $n_{2k}=p^{2k}$.
\end{itemize}

\Cref{sec:exact-formulas} gives exact formulas, and \cref{sec:approx} gives useful approximations.

\subsection{Exact formulas}\label{sec:exact-formulas}
Since the case of $k=1$ was discussed in \cref{ex:n=p}, we may assume $k\ge 2$ in the following.
\begin{theorem}
	For a prime $p$ and $n=kp$ where $2\le k\le p-1$, let $n_a$ be the number of Sylow-$p$ double cosets of $S_n$ of size $p^a$, for each $k\le a\le 2k$. Then
	\begin{equation}\label{eq:na}
		n_a = \frac{1}{p^a} \sum_{j=2k-a}^k \big((k-j)p\big)!j!\binom{k}{j}^2\big(p(p-1)^j\big)(-1)^{j-(2k-a)}\binom{j}{2k-a}.
	\end{equation}
\end{theorem}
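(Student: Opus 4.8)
The plan is to count, for each target size $p^a$, the number of elements $x\in S_n$ with $|P_nxP_n|=p^a$ and then divide by $p^a$, since every such double coset contains exactly $p^a$ elements. By \eqref{eq:dc-size} we have $|P_nxP_n|=|P_n|^2/|P_n\cap P_n^x|=p^{2k}/|P_n\cap P_n^x|$, so $|P_nxP_n|=p^a$ is equivalent to $|P_n\cap P_n^x|=p^{2k-a}$. Writing $Q=P_n^x$, the fibre $\{x:P_n^x=Q\}$ is a coset of $N_{S_n}(P_n)$, hence has size $|N_{S_n}(P_n)|=(p(p-1))^k\,k!$ by \eqref{eq:normaliser} (only the $j=1$ term survives, and $N_{S_p}(P_p)=C_p\rtimes C_{p-1}$). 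Thus, if $M_i$ denotes the number of $Q\in\Syl_p(S_n)$ with $|P_n\cap Q|=p^i$, then $n_a=p^{-a}(p(p-1))^k\,k!\cdot M_{2k-a}$, and the whole problem reduces to computing $M_i$.

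To compute $M_i$ I would first record a combinatorial model for Sylow subgroups: since $2\le k\le p-1$, every $Q\in\Syl_p(S_n)$ has exactly $k$ orbits, each of size $p$ and regular, so $Q$ is specified by an unordered partition of $\{1,\dots,n\}$ into $k$ blocks of size $p$ together with a choice, on each block, of one of the $(p-2)!$ regular cyclic subgroups of order $p$. The heart of the argument is an intersection lemma: writing $P_n=\langle c_1\rangle\times\cdots\times\langle c_k\rangle$ with $\langle c_t\rangle$ a regular $C_p$ on the block $B_t$, one has $|P_n\cap Q|=p^{m(Q)}$, where $m(Q)$ is the number of blocks $A$ of $Q$ that coincide \emph{as a set} with some $B_t$ \emph{and} carry the same cyclic subgroup $\langle c_t\rangle$. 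This follows from an orbit argument: if $1\neq g\in P_n\cap Q$ and $\alpha$ lies in the support of $g$, then the $\langle g\rangle$-orbit of $\alpha$ is simultaneously the unique $P_n$-block and the unique $Q$-block containing $\alpha$ (because on either side $g$ restricts to a $p$-cycle on each block in its support), forcing these two blocks to coincide; on such a common block $g$ then lies in the intersection of $\langle c_t\rangle$ with the cyclic subgroup of $Q$ carried by that block, which is nontrivial only when the two order-$p$ subgroups are equal. Hence $P_n\cap Q$ is exactly the product of the $\langle c_t\rangle$ over matched blocks.

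With the lemma in hand, $M_i$ is obtained by inclusion--exclusion on the number of matched blocks. Setting $S_j=\sum_{Q}\binom{m(Q)}{j}$, I would count $S_j$ directly: choose which $j$ of the $P_n$-blocks are to be matched ($\binom{k}{j}$ ways, each forcing the corresponding $j$ blocks of $Q$ together with their cyclic subgroups), and then complete $Q$ arbitrarily on the remaining $(k-j)p$ points, giving $\frac{((k-j)p)!}{(k-j)!\,(p(p-1))^{k-j}}$ completions (the number of Sylow $p$-subgroups of the symmetric group on those points). Thus $S_j=\binom{k}{j}\frac{((k-j)p)!}{(k-j)!\,(p(p-1))^{k-j}}$ and $M_i=\sum_{j=i}^{k}(-1)^{j-i}\binom{j}{i}S_j$. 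Substituting into $n_a=p^{-a}(p(p-1))^k\,k!\,M_{2k-a}$ with $i=2k-a$, and simplifying via $\frac{k!}{(k-j)!}=j!\binom{k}{j}$ and $(p(p-1))^k/(p(p-1))^{k-j}=(p(p-1))^j$, collapses the prefactors into $j!\binom{k}{j}^2((k-j)p)!(p(p-1))^j$, which is exactly \eqref{eq:na}. As a sanity check, the $a=k$ (i.e.\ $i=k$) term alone gives $n_k=k!(p-1)^k$, in agreement with \cref{lem: minsize} since $|N_{S_n}(P_n):P_n|=k!(p-1)^k$.

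The main obstacle is the intersection lemma, and specifically ruling out partially overlapping blocks: one must verify that a common nontrivial element forces its supporting $P_n$-blocks and $Q$-blocks to coincide exactly, and that mere set-coincidence is insufficient, since two distinct regular cyclic subgroups on the same $p$-element set meet trivially. Once this structural statement is pinned down, the evaluation of $S_j$, the inclusion--exclusion inversion, and the final algebraic simplification are all routine.
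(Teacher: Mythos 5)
Your proposal is correct and follows essentially the same route as the paper: the paper's generating-function identity \eqref{eq:gen-fn} and its evaluation at $x+1$ are exactly your binomial inversion of $S_j=\sum_Q\binom{m(Q)}{j}$, with the same product-of-choices count (choose matched blocks, $p(p-1)$ block-alignments each, complete arbitrarily on the remaining $(k-j)p$ points), merely phrased in terms of permutations $\pi$ rather than Sylow subgroups $Q$. The one substantive addition is that you prove the intersection lemma $|P_n\cap Q|=p^{m(Q)}$ explicitly, whereas the paper asserts the corresponding fact (that $a_i$ counts $\pi$ with $|R|=i$) without argument.
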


\begin{proof}
	The result will follow from considering the following generating function. Let
	\[ f_{k,p}(x) = \sum_{i=0}^k \#\{\pi\in S_{kp} \mid |C_p^k\cap\pi^{-1} C_p^k\pi| = p^i\}x^i. \]
	We claim that
	\begin{equation}\label{eq:gen-fn}
		f_{k,p}(x) = \sum_{i=0}^k\big((k-i)p)\big)!i!\binom{k}{i}^2\big(p(p-1)(x-1)\big)^i.
	\end{equation}
	First, to prove \eqref{eq:na} from \eqref{eq:gen-fn}: note that the $x^{2k-a}$ coefficient of $f_{k,p}$ is the number of $\pi\in S_{kp}$ such that $|C_p^k\pi C_p^k|=p^a$.
	If $|C_p^k\pi C_p^k|=p^a$, then there are exactly $p^a$ elements $\sigma\in S_{kp}$ such that $C_p^k\pi C_p^k=C_p^k\sigma C_p^k$ (namely, the set of such $\sigma$ is the double coset $C_p^k\pi C_p^k$ itself). Hence
	\[ n_a = \frac{1}{p^a}[x^{2k-a}]f_{k,p}(x) = \frac{1}{p^a}\sum_{j=2k-a}^k \big((k-j)p\big)!j!\binom{k}{j}^2\big(p(p-1)\big)^j(-1)^{j-(2k-a)}\binom{j}{2k-a}. \]
	To conclude, we prove that \eqref{eq:gen-fn} holds. Fix $k$ and $p$ and write $f_{k,p}(x) = \sum_i a_ix^i$. Let $\sigma_i$ be the cycle $\big( (i-1)p+1, \dotsc, ip\big)$. Then $a_i$ counts the number of $\pi\in S_{kp}$ such that the subset $R\subset\{1,2,\dotsc,k\}$, of indices $r$ which have the property that there exists another index $s$ with $\pi\sigma_r\pi^{-1}\in\langle\sigma_s\rangle$, has exactly size $i$. (Here $r,s\in\{1,2,\dotsc,k\}$.)
	
	Then the $x^i$ coefficient of $f_{k,p}(x+1)$ is $a_i+a_{i+1}\binom{i+1}{i} + \cdots + a_k\binom{k}{i}$ and we want to show that
	\begin{equation}\label{eq:rtp}
		a_i+a_{i+1}\binom{i+1}{i} + \cdots + a_k\binom{k}{i} = \big((k-i)p\big)! i! \binom{k}{i}^2 \big(p(p-1)\big)^i.
	\end{equation}
	The right hand side of \eqref{eq:rtp} counts the number of $\pi\in S_{kp}$ together with a distinguished subset $A\subset\{1,\dotsc,k\}$ of size $i$ such that for each $a\in A$ there exists $b\in\{1,\dotsc,k\}$ satisfying $\pi\sigma_a\pi^{-1}\in\langle \sigma_b\rangle$. Indeed, concretely choose:
	\begin{itemize}
		\item a pair of subsets $A,B\subset\{1,\dotsc,k\}$ with $|A|=|B|=i$ -- there are $\binom{k}{i}^2$ such choices;
		\item a bijection $\phi:A\to B$ -- there are $i!$ such choices;
		\item for each $a\in A$ (noting that $|A|=i$), a bijection $r_a:\{(a-1)p+1,\dotsc,ap\}\to\{(\phi(a)-1)p+1,\dotsc,\phi(a)p\}$ such that $r_a\sigma_ir_a^{-1}\in\langle\sigma_{\phi(a)}\rangle$ -- there are $p(p-1)$ such choices, for instance by first choosing $r_a((a-1)p+1)$ from one of $p$ possible values, then by choosing $r_a((a-1)p+2)$ from one of $p-1$ possible values, which then determines $r_a$; and
		\item a bijection $\tau:\{1,\dotsc,kp\}\setminus \bigcup_{a\in A}\{(a-1)p+1,\dotsc,ap\} \to \{1,\dotsc,kp\}\setminus\bigcup_{b\in B}\{(b-1)p+1,\dotsc,bp\}$ -- notice the domain and codomain of $\tau$ each have size $(k-i)p$ so there are $((k-i)p)!$ such choices;
	\end{itemize}
	then take $\pi$ to be defined by the $\{r_a\}_{a\in A}$ and $\tau$, whose domains and codomains both have disjoint unions $\{1,\dotsc,kp\}$.
	
	On the other hand, the left hand side of \eqref{eq:rtp} creates the pair $(\pi,A)$ as follows: first choose $j$ such that $i\le j\le k$, then take a permutation $\pi$ counted by $a_j$, that is, $\pi$ satisfies $|R|=j$ where 
	\[ R:= \{r\in\{1,\dotsc,k\} \mid \exists\ s,\ \pi\sigma_r\pi^{-1}\in\langle\sigma_s\rangle \}, \]
	and then choose the distinguished set $A$ to be one of the $\binom{j}{i}$ $i$-element subsets $A$ of the $j$-element subset $R\subset\{1,\dotsc,k\}$.
	Thus \eqref{eq:rtp} holds as desired, and this concludes the proof.
\end{proof}

\begin{example}
	The first few cases of the generating function considered above are, for example,
	\begin{itemize}
		\item $f_{1,p}(x) = p!+p(p-1)(x-1)$,
		\item $f_{2,p}(x) = (2p)! + p!\cdot 4p(p-1)(x-1)+2p^2(p-1)^2(x-1)^2$,
		\item $f_{3,p}(x) = (3p)! + (2p)!\cdot 9p(p-1)(x-1) + p!\cdot 18p^2(p-1)^2(x-1)^2 + 6p^3(p-1)^3(x-1)^3$.
	\end{itemize}
\end{example}

\subsection{Some approximations}\label{sec:approx}

A glance at the formula \eqref{eq:na} shows it is not so easy to understand; an alternating sum with factorials and binomial coefficients. The following result gives sharp upper and lower bounds. They show, in a strong sense, that when $p$ is large, most double cosets have size $p^{2k}$, uniformly in $k$. This implies that the number of double cosets is asymptotic to $\frac{(kp)!}{p^{2k}}$.

\begin{theorem}
	With notation as above, for all $p$ and $1\le k\le p-1$ we have that
	\[ \frac{(kp)!}{p^{2k}} \left(1-\frac{1}{(p-2)!}\right) \le n_{2k} \le \frac{(kp)!}{p^{2k}}. \]
\end{theorem}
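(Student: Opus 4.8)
The plan is to treat the two bounds separately, with the upper bound essentially free and the lower bound reduced to a union-bound estimate. For the upper bound I would simply recall that the $(P_n,P_n)$-double cosets partition $S_{kp}$, so the $n_{2k}$ double cosets of maximal size $p^{2k}$ account for $n_{2k}p^{2k}$ distinct permutations and hence $n_{2k}p^{2k}\le(kp)!$. Writing $P_n=C_p^k=\langle\sigma_1,\dots,\sigma_k\rangle$ with $\sigma_r$ the block $p$-cycle $\big((r-1)p+1,\dots,rp\big)$, I would next record the combinatorial meaning of $n_{2k}$: by \eqref{eq:dc-size}, a permutation $\pi$ lies in a double coset of size $p^{2k}$ exactly when $C_p^k\cap\pi^{-1}C_p^k\pi=1$, and each such double coset contains $p^{2k}$ of them, so
\[ n_{2k}\,p^{2k} \;=\; \#\{\pi\in S_{kp} : C_p^k\cap\pi^{-1}C_p^k\pi=1\} \;=\; (kp)!-N, \]
where $N:=\#\{\pi\in S_{kp}: C_p^k\cap\pi^{-1}C_p^k\pi\neq 1\}$. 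The lower bound is then equivalent to the estimate $N\le (kp)!/(p-2)!$.

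The key reduction is the observation that a nontrivial intersection is always witnessed by a single generator: if $C_p^k\cap\pi^{-1}C_p^k\pi\neq 1$, equivalently $\pi C_p^k\pi^{-1}\cap C_p^k\neq 1$, then I claim $\pi\sigma_r\pi^{-1}\in C_p^k$ for some $r$. To see this, take $1\neq h=\pi g\pi^{-1}\in C_p^k$ with $g=\sigma_1^{a_1}\cdots\sigma_k^{a_k}$; the $\langle h\rangle$-orbits on $\operatorname{supp}(h)$ are exactly the sets $\pi(\text{block }r)$ over the indices $r$ with $a_r\neq 0$, while membership $h\in C_p^k$ forces each such orbit to be a standard block $s$ and $h$ to act on it as a power of $\sigma_s$; for such an $r$ this gives $\pi\sigma_r\pi^{-1}\in\langle\sigma_s\rangle\subseteq C_p^k$. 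Granting this, a union bound yields
\[ N \;\le\; \sum_{r=1}^{k}\#\{\pi\in S_{kp}: \pi\sigma_r\pi^{-1}\in C_p^k\}. \]
Each summand is easy to evaluate: the elements of $C_p^k$ that are single $p$-cycles are the $k(p-1)$ nontrivial powers of the $\sigma_s$, and for each target $p$-cycle $c$ the solutions of $\pi\sigma_r\pi^{-1}=c$ form a coset of the centraliser $C_{S_{kp}}(\sigma_r)$, of size $p\cdot\big((k-1)p\big)!$. Hence each summand equals $k(p-1)\cdot p\,\big((k-1)p\big)!$ and
\[ N \;\le\; k^2\,p(p-1)\,\big((k-1)p\big)!. \]

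It remains to prove the elementary inequality $k^2 p(p-1)\big((k-1)p\big)!\le (kp)!/(p-2)!$. Multiplying through by $(p-2)!/\big((k-1)p\big)!$ and using $p(p-1)(p-2)!=p!$, this is equivalent to
\[ k^2\,p! \;\le\; \frac{(kp)!}{\big((k-1)p\big)!} \;=\; \prod_{j=0}^{p-1}(kp-j). \]
Since $kp-j\ge k(p-j)$ for each $0\le j\le p-1$ (as $k\ge 1$), the product is at least $k^p\prod_{j=0}^{p-1}(p-j)=k^p\,p!\ge k^2\,p!$, the last step because $k\ge 1$ and $p\ge 2$. Combining, $N\le (kp)!/(p-2)!$, and therefore $n_{2k}p^{2k}=(kp)!-N\ge (kp)!\big(1-1/(p-2)!\big)$, as required.

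I expect the main obstacle to be the witnessing step in the second paragraph — arguing that a nonidentity element of the intersection forces some single generator $\sigma_r$ to conjugate into $C_p^k$ — since this is where the block structure of $C_p^k$ must be used carefully through the orbit/support bookkeeping, whereas the counting and the final inequality are routine. As a sanity check, note that the resulting bound $N\le k^2 p(p-1)\big((k-1)p\big)!$ is precisely the $j=1$ term of the exact formula \eqref{eq:na} for $n_{2k}$, so the union bound discards exactly the higher inclusion–exclusion corrections.
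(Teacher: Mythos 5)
Your proof is correct, but it takes a genuinely different route from the paper's. The paper has already established the exact alternating-sum formula \eqref{eq:na} in the preceding theorem, and its proof of the bounds consists of writing $p^{2k}n_{2k}=\sum_{j=0}^k(-1)^j\Gamma_j$ with $\Gamma_j=\big((k-j)p\big)!\,j!\binom{k}{j}^2\big(p(p-1)\big)^j$, showing $\Gamma_{j+1}/\Gamma_j\le \frac{1}{(j+1)(p-2)!}$ so that the terms decrease, and then sandwiching the alternating sum between $\Gamma_0-\Gamma_1$ and $\Gamma_0$. You bypass the exact formula entirely: your witnessing lemma (a nontrivial element of $C_p^k\cap\pi^{-1}C_p^k\pi$ forces $\pi\sigma_r\pi^{-1}\in\langle\sigma_s\rangle$ for some single pair $r,s$, via the orbit/support argument, which is correct since $p$ is prime so each nonzero power $\sigma_r^{a_r}$ generates $\langle\sigma_r\rangle$) reduces everything to a first-order Bonferroni bound $N\le k\cdot k(p-1)\cdot p\big((k-1)p\big)!=\Gamma_1$, and your closing inequality $k^2p!\le\prod_{j=0}^{p-1}(kp-j)$ is exactly the paper's estimate $\Gamma_1/\Gamma_0\le 1/(p-2)!$ in disguise. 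As you observe yourself, your union bound recovers precisely the $j=1$ term of the inclusion--exclusion, so the two arguments are morally the truncation of the same expansion at different stages: the paper's version is a two-line corollary of the exact formula and extends immediately to higher-order sandwich bounds (as in the remark following the theorem), while yours is self-contained, independent of the generating-function computation, and makes the source of the error term $1/(p-2)!$ transparent. Both are valid; yours would serve as an alternative proof for a reader who skips \cref{sec:exact-formulas}.
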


\begin{proof}
	From \eqref{eq:na},
	\begin{equation}\label{eq:n_2k}
		n_{2k} = \frac{1}{p^{2k}} \sum_{j=0}^k (-1)^j\big((k-j)p\big)!j!\binom{k}{j}^2\big(p(p-1)\big)^j.
	\end{equation}
	Note that the $j=0$ term in the sum is $(kp)!$. The proof proceeds by showing that the other terms are super exponentially smaller than the $j=0$ term. For $0\le j\le k-1$ let
	\[ \Gamma_j = \big((k-j)p\big)!j!\binom{k}{j}^2\big(p(p-1)\big)^j, \]
	so $n_{2k}=\frac{1}{p^{2k}}\sum_{j=0}^k (-1)^j\Gamma_j$.
	We compute
	\begin{align*}
		\frac{\Gamma_{j+1}}{\Gamma_j} &= \frac{\big((k-j)p-p\big)!(j+1)!\binom{k}{j+1}^2\big(p(p-1)\big)^{j+1}}{\big((k-j)p\big)!j!\binom{k}{j}^2\big(p(p-1)\big)^j}\\
		&= \frac{(j+1)p(p-1)\left(\tfrac{k-j}{j+1}\right)^2}{\big((k-j)p\big)\big((k-j)p-1\big)\cdots\big((k-j)p-p+1\big)}\\
		&= \frac{p(p-1)(k-j)^2}{(j+1)\big((k-j)p\big)\big((k-j)p-1\big)\cdots\big((k-j)p-p+1\big)}.
	\end{align*}
	Since $(p-1)(k-j)\le(k-j)p-1$, the ratio is bounded above by
	\begin{equation}\label{eq:ratio}
		\frac{\Gamma_{j+1}}{\Gamma_j} \le \frac{1}{(j+1)\big((k-j)p-2\big)\cdots\big((k-j)p-p+1\big)} \le \frac{1}{(j+1)(p-2)!}.
	\end{equation}
	It follows that $\Gamma_{j+1}<\Gamma_j$ and that the sum in \eqref{eq:n_2k} is an alternating sum of decreasing positive terms. Call $S_k = p_{2k}\cdot n_{2k} = \sum_{j=0}^k (-1)^j\Gamma_j$, then we have
	\[ \Gamma_0-\Gamma_1 \le S_k \le \Gamma_0 \]
	by taking $j=0$ in \eqref{eq:ratio}.
	Finally,
	\[ \Gamma_0-\Gamma_1 = \Gamma_0\left(1-\frac{\Gamma_1}{\Gamma_0}\right) \ge \frac{\Gamma_0}{(p-2)!}. \]
	Thus
	\[ (kp)!\left(1-\frac{1}{(p-2)!}\right) \le S_k \le (kp)!, \]
	and dividing through by $p^{2k}$ gives the result.
\end{proof}

\begin{remark}
	\begin{enumerate}[label=(\roman*)]
		\item The usual manipulations with alternating sums show $S_k$ is bounded above by $\Gamma_0-\Gamma_1+\cdots+\Gamma_{2b}$ and below by $\Gamma_0-\Gamma_1+\cdots-\Gamma_{2b+1}$ for any $b$.
		
		\item For $k\le a\le 2k$, \eqref{eq:na} gives an explicit formula for the quantity $n_a$. Similar techniques, not developed in detail here, show that the sum in $n_a$ is dominated by the $j=2k-a$ term. Thus, for $k\le a\le 2k$,
		\begin{equation}\label{eq:na-asym}
			n_a \sim \frac{1}{p^a}\big((a-k)p\big)!(2k-a)!\binom{k}{a-k}^2\big(p(p-1)\big)^{2k-a}.
		\end{equation}
		As a check, when $a=k$ (the smallest possible value for $a$), we know $n_k = k!(p-1)^k$, and so \eqref{eq:na} and the computation in \cref{ex:n=p} agree.
		
		\item The value of $n_a$ falls off extremely rapidly from $n_{2k}$. Straightforward asymptotics show a super-exponential decrease: letting $u=a-k$ and $b=2k-a$ and recalling that $k<p$ we observe from \eqref{eq:na-asym} that
		\[ \frac{n_a}{n_{2k}} \sim \big(p^2(p-1)\big)^v v! \binom{k}{u}^2 \frac{(up)!}{(kp)!} 
			\le p^{4v}\cdot 4^p \cdot \frac{(up)!}{(kp)!} 
			= \left( \frac{p^4}{((k-v)p)^{p/2}} \right)^v \cdot \left( \frac{4}{((k-v)p)^{v/2}} \right)^p, \]
		from which we can conclude, for example, that
		\[ \frac{n_a}{n_{2k}} = o(p^{-p/4}). \]
		
	\end{enumerate}
\end{remark}

\bigskip
\section{General case}\label{sec:4}

\Cref{thm:1} considers $f(n,p)=\Prob(P_n\cap P_n^x>1)$ with $P_n$ a Sylow $p$-subgroup of $S_n$. We begin by clarifying what probability is being computed. There are three possibilities:
\begin{enumerate}[(1)]
	\item Pick, uniformly at random and independently, two Sylow $p$-subgroups from the list of all Sylow $p$-subgroups of $S_n$, i.e.~$\Syl_p(S_n)$.
	\item Fix $P_n\in\Syl_p(S_n)$ and pick, uniformly, a Sylow subgroup from $\Syl_p(S_n)$.
	\item Fix $P_n\in\Syl_p(S_n)$, choose $x\in S_n$ uniformly and consider $P_n\cap P_n^x$.
\end{enumerate}
In all cases, $P_n=P_n^x$ is allowed.

\begin{lemma}\label{lem:prob}
	Under any of (1), (2) and (3), the probability $\Prob(P_n\cap P_n^x>1)$ is the same.
\end{lemma}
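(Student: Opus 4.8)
The plan is to reduce all three formulations to a single counting problem using two standard consequences of Sylow's theorems: that $S_n$ acts transitively on $\Syl_p(S_n)$ by conjugation, and that the stabiliser of $P_n$ under this action is $N_{S_n}(P_n)$, so that $|\Syl_p(S_n)|=[S_n:N_{S_n}(P_n)]$. Throughout, write $G=S_n$, $\mathcal{S}=\Syl_p(G)$, and $N=N_{S_n}(P_n)$.

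First I would show that (3) and (2) give the same probability. Consider the conjugation map $\phi\colon G\to\mathcal{S}$, $x\mapsto P_n^x$. By transitivity it is surjective, and its fibre over $P_n^x$ is exactly the right coset $Nx$ (since $P_n^y=P_n^x$ if and only if $yx^{-1}\in N$); hence every fibre has the same cardinality $|N|$. Therefore $\phi$ transports the uniform distribution on $G$ to the uniform distribution on $\mathcal{S}$. Since the event in (3) depends on $x$ only through $Q:=P_n^x$, its probability equals the probability in (2) that a uniformly chosen $Q\in\mathcal{S}$ satisfies $P_n\cap Q>1$.

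Next I would show that (1) and (2) agree. In (1) we draw $P,Q\in\mathcal{S}$ independently and uniformly, so
\[ \Prob(P\cap Q>1)=\frac{1}{|\mathcal{S}|^2}\sum_{P\in\mathcal{S}}\#\{Q\in\mathcal{S}:P\cap Q>1\}. \]
The key observation is that the inner count does not depend on $P$. Given $P,P'\in\mathcal{S}$, transitivity supplies $g\in G$ with $P'=P^g$, and then $Q\mapsto Q^g$ is a bijection of $\mathcal{S}$ satisfying $P\cap Q>1$ if and only if $P^g\cap Q^g>1$; this matches the two counts. Fixing $P=P_n$ in the outer sum therefore collapses the average and recovers precisely the probability in (2).

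No step presents a genuine obstacle: the lemma is essentially bookkeeping on top of Sylow's theorems. The only point deserving care is the equidistribution claim in the (3)$\Rightarrow$(2) reduction, namely that the map $x\mapsto P_n^x$ has fibres of constant size $|N|$; this is exactly the orbit--stabiliser theorem applied to the conjugation action on $\mathcal{S}$, and it is what makes the pushforward of the uniform measure on $S_n$ uniform on $\Syl_p(S_n)$.
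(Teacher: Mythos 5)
Your argument is correct and is essentially the paper's own proof: the (1)$\Leftrightarrow$(2) step is transitivity of the conjugation action plus conjugation-invariance of the uniform distribution, and the (2)$\Leftrightarrow$(3) step is the observation that the fibres of $x\mapsto P_n^x$ are the cosets of $N_{S_n}(P_n)$ and hence have constant size. You have merely written out the bookkeeping in more detail than the paper does.
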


\begin{proof}
	From Sylow's theorems, $S_n$ acts transitively on $\Syl_p(S_n)$ by conjugation. The uniform distribution is invariant under conjugation, so clearly (1) and (2) give the same probability.
	With $P_n$ fixed, $P_n^{gx}=P_n^x$ if and only if $g\in N_{S_n}(P_n)$. So the uniform distribution on $\Syl_p(S_n)$ assigns probability $\frac{1}{n!}|N_{S_n}(P_n)|$ to each, giving the equality between (2) and (3).
\end{proof}

Throughout, we will use the probabilistic set-up described in (3).

\subsection{Odd primes}
For a prime $p$, let $x\in S_n$ be an element of order $p$ with fewer than $p$ fixed points. Note that this uniquely defines the conjugacy class of $x$. 
Let $f'(n,p)$ be the probability that two random conjugates of $x$, say $x^a$ and $x^b$ with $a,b$ uniform and independent in $S_n$ both centralize a common element of order $p$ (i.e.~the centralizer of $\langle x^a, x^b \rangle$ has order divisible by $p$). As above, it suffices to consider all pairs $x, x^b$ as $b$ ranges uniformly over all $b$.  

We note:

\begin{lemma}\label{lem:1'}
	$f(n,p)  \le f'(n,p)$.
\end{lemma}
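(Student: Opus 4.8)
The plan is to pick a particularly convenient representative of the conjugacy class of $x$ -- one lying in the centre of $P_n$ -- and then to show that the event defining $f(n,p)$ is pointwise contained in the event defining $f'(n,p)$. First I would pin down the cycle type of $x$. An element of order $p$ in $S_n$ is a product of $k$ disjoint $p$-cycles together with $n-kp$ fixed points; requiring fewer than $p$ fixed points forces $k=\lfloor n/p\rfloor$ and exactly $n-p\lfloor n/p\rfloor=a_0$ fixed points, where $a_0$ is the last digit of the $p$-adic expansion $n=\sum_j a_jp^j$. This confirms that the class of $x$ is uniquely determined, and that it is the class of elements of order $p$ with the largest possible number of $p$-cycles.

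Next I would produce such an element inside $Z(P_n)$. Using the structure $P_n\cong\prod_{j\ge1}(P_{p^j})^{a_j}$ from \cref{sec:sn-sylow}, we have $Z(P_n)\cong\prod_{j\ge1}Z(P_{p^j})^{a_j}$, where each $Z(P_{p^j})$ is cyclic of order $p$. The key point is that a generator of $Z(P_{p^j})$ acts without fixed points on its block of $p^j$ points: since $P_{p^j}$ is transitive on that block, any nontrivial central element $z$ fixing a point $\alpha$ would satisfy $z(h\alpha)=h(z\alpha)=h\alpha$ for all $h\in P_{p^j}$ and hence fix the entire block, forcing $z=1$. Thus a generator of $Z(P_{p^j})$ is a fixed-point-free element of order $p$, i.e.\ a product of $p^{j-1}$ cycles of length $p$. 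Choosing a generator in every factor yields an element $x_0\in Z(P_n)$ with exactly $\sum_{j\ge1}a_jp^{j-1}=\lfloor n/p\rfloor$ cycles of length $p$ and $a_0$ fixed points, so $x_0$ is conjugate to $x$. As $f'(n,p)$ depends only on the conjugacy class of $x$, I may replace $x$ by $x_0$ and assume henceforth that $x\in Z(P_n)$.

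With $x\in Z(P_n)$ the argument closes quickly. Since $x$ is central in $P_n$ we have $P_n\le C_{S_n}(x)$, and conjugating by any $g\in S_n$ gives $P_n^g\le C_{S_n}(x)^g=C_{S_n}(x^g)$. Now suppose $P_n\cap P_n^g\ne1$; being a nontrivial $p$-group, the intersection contains an element $z$ of order $p$. Then $z\in P_n\le C_{S_n}(x)$ and $z\in P_n^g\le C_{S_n}(x^g)$, so $z$ is an element of order $p$ centralising both $x$ and $x^g$. Recalling (as noted just before the statement) that it suffices to take the first conjugate to be $x$ itself and let the second, $x^g$, vary with $g$ uniform over $S_n$, this shows that the event $\{P_n\cap P_n^g\ne1\}$ is contained in the event $\{x\text{ and }x^g\text{ share a common centralising element of order }p\}$. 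Taking probabilities over uniform $g$ yields $f(n,p)\le f'(n,p)$.

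The step I expect to be the crux is the second one: locating a representative of the maximal-cycle class inside $Z(P_n)$. Everything else is a formal manipulation of the inclusions $P_n\le C_{S_n}(x)$ and their conjugates, but the usefulness of the bound hinges on choosing $x$ to be central, which is exactly what the transitivity-forces-fixed-point-free observation provides, and which in turn relies on the `fewer than $p$ fixed points' hypothesis singling out precisely the central cycle type.
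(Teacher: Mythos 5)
Your proposal is correct and follows essentially the same route as the paper: pick $x$ of order $p$ with fewer than $p$ fixed points inside $Z(P_n)$, note that any element of order $p$ in $P_n\cap P_n^g$ centralises both $x$ and $x^g$, and compare probabilities. The only difference is that you spell out the existence of such a central element (via the wreath-product structure and the fixed-point-freeness of central elements of transitive $p$-subgroups), which the paper takes for granted.
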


\begin{proof}
	Let $P$ and $Q$ be Sylow $p$-subgroups of $S_n$. Let $x \in Z(P)$ have order $p$ and fewer than $p$ fixed points. If $Q=P^y$, then $x^y \in Z(Q)$. If $w \in P \cap Q$ has order $p$, then $w$ commutes with $x$ and $x^y$. Letting $y$ range over all elements, the result follows. 
\end{proof}  

Thus, it suffices to prove the following.           

\begin{theorem}\label{thm:1'}
	If $p\ne 2$, then $\lim_{n\to\infty} f'(n,p)=0$.
\end{theorem}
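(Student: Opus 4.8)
The plan is to translate the event defining $f'(n,p)$ into a statement about the permutation group $G := \langle x, x^b\rangle$ and its centralizer, and then to show that this group is almost always as large as possible using the Eberhard--Garzoni random generation theorem. First I would record the structure of $x$: since $x$ has order $p$ with fewer than $p$ fixed points, it is a product of $r=\lfloor n/p\rfloor$ disjoint $p$-cycles, fixing a set $F$ of size $f=n-rp<p$. Crucially, because $p$ is odd each $p$-cycle is an even permutation, so $x\in A_n$ and hence $G\le A_n$ for every $b$. Unwinding the definitions (and using the conjugation-invariance noted just before the statement), $f'(n,p)$ equals the probability over uniform $b\in S_n$ that $C_{S_n}(G)$ has order divisible by $p$, i.e.\ that $G$ is centralized by some element of order $p$ (Cauchy's theorem).

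The heart of the argument is the following deterministic dichotomy. Set $\Delta:=\operatorname{supp}(x)\cup\operatorname{supp}(x^b)$ and $F':=\{1,\dots,n\}\setminus\Delta$; then $F'=\operatorname{Fix}(x)\cap\operatorname{Fix}(x^b)$, so $|F'|\le f<p$, and $G$ fixes $F'$ pointwise while acting on $\Delta$. I claim that if $A_\Delta\le G$ (the alternating group on $\Delta$, fixing $F'$ pointwise), then no element of order $p$ centralizes $G$. Indeed, the $G$-orbits are then the single large orbit $\Delta$ together with the $|F'|$ fixed points, so any $c\in C_{S_n}(G)$ must preserve $\Delta$ and $F'$ separately; on $\Delta$ it centralizes $A_\Delta$ and is therefore trivial (as $C_{\Sym(\Delta)}(A_\Delta)=1$ for $|\Delta|$ large), whence $C_{S_n}(G)=\Sym(F')$. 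Since $|F'|<p$, the order $|F'|!$ is coprime to $p$, so $C_{S_n}(G)$ contains no element of order $p$. This yields the bound $f'(n,p)\le\Prob_b\big(A_\Delta\not\le\langle x,x^b\rangle\big)$.

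It remains to show the right-hand probability tends to $0$. Here I would invoke the Eberhard--Garzoni theorem on random generation of alternating groups \cite{EG21,EG22}: since $x\in A_n$ has support $|\operatorname{supp}(x)|=n-f\ge n-(p-1)$, which is $(1-o(1))n$ as $n\to\infty$ for fixed $p$, two independent random conjugates of $x$ generate the alternating group on the union of their supports with probability tending to $1$. Applying this to the conjugates $x$ and $x^b$ gives $\Prob_b\big(A_\Delta\not\le\langle x,x^b\rangle\big)\to 0$, completing the proof.

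I expect the main obstacle to be the precise invocation of Eberhard--Garzoni: one must check that their hypotheses apply to the sequence $x=x_n$ (a product of $\lfloor n/p\rfloor$ disjoint $p$-cycles of fixed length and near-total support), and, importantly, that their conclusion is phrased for the alternating group on the combined support $\Delta$ rather than on all of $\{1,\dots,n\}$, so that the at most $p-1$ common fixed points are correctly accounted for. A secondary point to verify carefully is the centralizer computation, namely that the primitivity of $A_\Delta$ forces $C_{\Sym(\Delta)}(G)=1$; this is also where the oddness of $p$ enters decisively, since it is exactly what places $x$ in $A_n$ and makes the attainable target group alternating rather than (unattainably) symmetric.
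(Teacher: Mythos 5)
Your argument is correct for the statement as written (each fixed odd $p$, $n\to\infty$) and follows essentially the same route as the paper: reduce the event to the centralizer of $G=\langle x,x^b\rangle$ containing an element of order $p$, rule this out when $G$ is a full alternating group, and invoke Eberhard--Garzoni. Two caveats are worth recording. First, \cref{thm:EG} as cited concludes that two random conjugates generate $A_n$ itself, not the alternating group on the union $\Delta$ of their supports; your bound still goes through, because generation of $A_n$ forces the common fixed-point set $F'$ to be empty, hence $\Delta=\{1,\dots,n\}$ and $A_\Delta=A_n\le G$, but as phrased your invocation attributes to the theorem a statement it does not make. Second, your proof only yields the limit for fixed $p$: since $x$ may have up to $p-1$ fixed points, the hypothesis $F(x_n)/n^{1/2}\to 0$ fails once $p$ grows faster than $n^{1/2}$, so this argument does not give the uniformity in $2<p\le n$ asserted in \cref{thm:1}(a). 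The paper supplies the missing ingredient in \cref{lem:special}: conditioning on the union $\Omega$ of the $\langle x,x^b\rangle$-orbits of size prime to $p$ (of which there are fewer than $p$, so any order-$p$ centralizing element acts trivially there) and restricting to the complement $\Omega'$, where the induced permutations are fixed-point-free of order $p$ and Eberhard--Garzoni applies uniformly. If you only intend to prove the displayed statement for fixed $p$, your proof is complete modulo the first caveat; if you intend it to feed into \cref{thm:1}(a) as stated, the reduction of \cref{lem:special} (or an equivalent) is genuinely needed.
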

 
The proof of \cref{thm:1} uses the following special case of results of Eberhard and Garzoni \cite{EG21,EG22}. See in particular \cite[Theorem 1.1]{EG22}.

\begin{theorem}[\cite{EG22}]\label{thm:EG}
	Let $x_n\in S_n$ be an element of odd order for each $n\in\N$ such that the number of fixed points $F(x_n)$ satisfies $F(x_n)/n^{1/2}\to 0$ as $n\to\infty$. Then the probability that two random conjugates of $x_n$ generate $A_n$ goes to $1$ as $n\to\infty$.
\end{theorem}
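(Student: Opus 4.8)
The plan is to follow the strategy of \cite{EG22}, bounding the probability of failure by a union bound over the maximal subgroups of $A_n$. Writing $x=x_n$ and conjugating by a single element, the statement is equivalent to the assertion that, with $x$ fixed and $g$ uniform in $S_n$, the group $G_g:=\langle x,x^g\rangle$ equals $A_n$ with probability tending to $1$. Since $x$ has odd order all of its cycles have odd length, so $x$ is an even permutation and $G_g\le A_n$; it therefore suffices to bound $\Prob(G_g\ne A_n)$. Every proper subgroup of $A_n$ lies in a maximal one, and by the O'Nan--Scott theorem such a maximal subgroup is intransitive, imprimitive (transitive but preserving a nontrivial block system), or primitive. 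I would bound the probability that $G_g$ is contained in a subgroup of each type separately, and show that each bound tends to $0$.

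For the intransitive case, $G_g$ is intransitive exactly when $x$ and $x^g$ share a common invariant $k$-set for some $1\le k\le n-1$. Let $N_k$ be the number of $x$-invariant $k$-subsets, i.e.\ the number of unions of cycles of $x$ having total length $k$. For a fixed $x$-invariant set $T$ of size $k$ one has $x^g(T)=T$ if and only if $g^{-1}T$ is $x$-invariant, and since $g^{-1}T$ is uniform over all $k$-sets this event has probability $N_k/\binom{n}{k}$; a union bound over the $N_k$ choices of $T$ gives
\[ \Prob(G_g\text{ intransitive})\le \sum_{k=1}^{n-1}\frac{N_k^2}{\binom{n}{k}}. \]
The decisive term is $k=1$ (and symmetrically $k=n-1$): here $N_1=F(x_n)$, contributing $F(x_n)^2/n$, which tends to $0$ precisely under the hypothesis $F(x_n)=o(n^{1/2})$, and this is where the threshold originates. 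The remaining terms $2\le k\le n-2$ must be shown to be negligible; using the generating identity $\sum_k N_k z^k=\prod_j (1+z^j)^{c_j}$, where $c_j$ is the number of $j$-cycles of $x$, one verifies that these middle terms are exponentially small, since $\binom{n}{k}$ dwarfs $N_k^2$ away from the two endpoints.

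For the imprimitive case I would run an analogous but more delicate count, summing over block systems (partitions of $\{1,\dots,n\}$ into $b$ equal blocks of size $a=n/b$), bounding both the number of $x$-invariant block systems of each type and the probability that $x^g$ preserves a fixed one. For the primitive case I would invoke the quantitative bounds on the orders of primitive groups due to Praeger--Saxl, Cameron and Mar\'oti (which rest on the classification of finite simple groups): a primitive subgroup $M\le S_n$ not containing $A_n$ satisfies $|M|\le n^{O(\log n)}$, apart from a short list of ``large'' groups, namely $A_m$ or $S_m$ in a subset or product action. A direct count shows
\[ \Prob\bigl(x,x^g\in M\text{ for some conjugate of a fixed primitive }M_0\bigr)\le \frac{|M_0|\,|C_{S_n}(x)|^2}{n!}, \]
and because $x$ has support at least $n-o(n^{1/2})$ its centralizer is super-exponentially smaller than $n!$; hence, even after summing over the polynomially many classes of primitive maximal subgroups, this contribution is $o(1)$.

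The main obstacle is twofold. First, the middle-range estimates --- both the terms $2\le k\le n-2$ for transitivity and, above all, the block-system count for imprimitivity --- require genuine care: one must bound sums of ratios such as $N_k^2/\binom{n}{k}$ uniformly over all admissible odd-order cycle types, and the imprimitive count has no single dominant term to isolate. Second, the ``large'' primitive groups in subset and product actions escape the $n^{O(\log n)}$ order bound and must be handled separately; since such a group is essentially a symmetric group acting on a smaller set, one argues that $x$ would be forced to be compatible with that smaller action, an event of vanishing probability. Once these three bounds are in place, assembling them into $\Prob(G_g\ne A_n)\to 0$ is routine.
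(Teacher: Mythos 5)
The first thing to say is that the paper contains no proof of this statement to compare against: \cref{thm:EG} is imported verbatim from Eberhard--Garzoni \cite[Theorem 1.1]{EG22}, and the intended ``proof'' in this paper is the citation itself. What you have attempted is a reconstruction of that research paper, and your skeleton does match their actual strategy: reduce to $\langle x,x^g\rangle$ with $g$ uniform (conjugation invariance), note that odd order forces all cycles to have odd length so $x\in A_n$, and run a first-moment union bound over the O'Nan--Scott types. You also correctly locate the sharp term: the expected number of common fixed points is $\sim F(x_n)^2/n$, which is exactly why $F(x_n)=o(n^{1/2})$ is the right hypothesis (and why it is sharp: for $F\gg n^{1/2}$ a common fixed point exists with high probability). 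Your primitive-case bound $|M_0|\,|C_{S_n}(x)|^2/n!$ is also sound: odd order with $o(n^{1/2})$ fixed points forces every nontrivial cycle to have length at least $3$, so $|C_{S_n}(x)|\le e^{(1/3+o(1))\,n\log n}$, which beats the Praeger--Saxl/Mar\'oti order bounds for primitive groups with room to spare, even for the subset- and product-action families.

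As a proof, however, the proposal has a genuine hole precisely where the real difficulty lies. The imprimitive case is not ``analogous but more delicate'': it is the technical heart of \cite{EG21,EG22}, and a naive union bound over block systems does not close without substantial further ideas; you leave it entirely unexecuted. Relatedly, your middle-range intransitive claim is asserted rather than proved, and it is worth noting that oddness of the order must enter beyond putting $x$ in $A_n$: the bound $\sum_k N_k^2/\binom{n}{k}$ genuinely fails for elements with many $2$-cycles --- for a fixed-point-free involution one has $N_2\ge n/2$, so the $k=2$ term is bounded below by a constant, and indeed two involutions only ever generate a dihedral group --- so the assertion that ``$\binom{n}{k}$ dwarfs $N_k^2$ away from the endpoints'' requires a uniform argument over all odd cycle types exploiting that nontrivial cycles have length $\ge 3$, which you do not supply. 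In short: correct strategy and correctly identified threshold, but the two estimates you defer (the uniform middle-range sums and the entire imprimitive case) constitute the content of the cited theorem, so what you have is a plan rather than a proof; within the scope of this paper, the correct move is simply to cite \cite{EG22}.
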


\begin{remark}
	The probability in \cref{thm:EG} can be taken either as the chance that $\langle x_n^a,x_n^b\rangle=A_n$ with $a,b\in S_n$ chosen uniformly and independently, \textit{or} as the chance that two independent uniformly chosen elements of the class of $x_n$ generate $A_n$. As in \cref{lem:prob}, they agree.
\end{remark}

We also use the following elementary result.

\begin{lemma}\label{lem:elem}
	Let $L$ be a subgroup of $G=S_n$.
	\begin{enumerate}
	\item If $L$ acts primitively, then either $L$ is cyclic of order $n$ with $n$ prime or $C_G(L)=1$.
	\item If $L$ acts transitively and $r$ is a prime dividing $|C_G(L)|$, then $r \mid n$.
	\end{enumerate}
\end{lemma}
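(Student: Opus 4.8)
The plan is to prove both parts from one classical observation: the centralizer of a transitive permutation group acts \emph{semiregularly} (freely). Concretely, I would first record the following. Suppose $L\le G=S_n$ is transitive and $c\in C_G(L)$ fixes a point $\alpha$. For any other point $\beta$, transitivity gives $\ell\in L$ with $\alpha^\ell=\beta$, and then $\beta^c=(\alpha^\ell)^c=(\alpha^c)^\ell=\alpha^\ell=\beta$ since $c$ commutes with $\ell$; hence $c$ fixes every point and $c=1$. Thus no nontrivial element of $C_G(L)$ has a fixed point.

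For part (2), take a prime $r\mid |C_G(L)|$ and an element $c\in C_G(L)$ of order $r$. By the semiregularity just established, $c$ has no fixed points, so every cycle of $c$ has length dividing $r$ and different from $1$, i.e.\ length exactly $r$. Therefore $n$ is a disjoint union of $r$-cycles and $r\mid n$, as required.

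For part (1), I would assume $C_G(L)\ne 1$ and show $L$ is cyclic of prime order $n$. Since $L$ is primitive it is transitive, so $C:=C_G(L)$ is semiregular by the opening observation. The key step is that the orbits of $C$ form a block system for $L$: because $L$ normalises its own centraliser, $L$ permutes the $C$-orbits, and since $L$ is transitive these orbits are all of equal size and constitute a system of imprimitivity. Primitivity forces this system to be trivial; as $C\ne1$ has a nonsingleton orbit, the only option is a single block equal to the whole set, so $C$ is transitive. A group that is both transitive and semiregular is regular, so $|C|=n$. Now $L$ centralises the transitive group $C$, so by the opening observation applied to $C$ the group $L$ is itself semiregular, hence (being transitive) regular, giving $|L|=n$ and trivial point stabilisers.

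Finally I would invoke that a regular primitive group is cyclic of prime order: regularity means the point stabiliser is $\{1\}$, and primitivity means this stabiliser is maximal in $L$, so $L$ has no proper nontrivial subgroups and is therefore cyclic of prime order $p=n$. The main obstacle is the block-system argument for part (1): one must verify carefully that $L$ normalises $C_G(L)$ and that $C$-orbits are genuinely permuted (so they form a legitimate system of imprimitivity), after which the ``bootstrapping'' from semiregularity to regularity of both $C$ and $L$ is the conceptual heart of the proof. Everything else is routine.
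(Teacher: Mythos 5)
Your proof is correct. The paper's argument is shorter and works with a single element: take $x\in C_G(L)$ of prime order $r$; since $L$ centralises $x$ it permutes the $\langle x\rangle$-orbits, so transitivity forces all orbits to have a common size (necessarily $r$, giving $r\mid n$), and primitivity forces a single orbit, so $x$ is an $n$-cycle with $n=r$ prime and then $L\le C_{S_n}(x)=\langle x\rangle$ forces $L=\langle x\rangle$. You instead first establish semiregularity of $C_G(L)$ (which handles part (2) just as cleanly), and for part (1) you run a block-system argument on the orbits of the \emph{whole} centraliser $C$, bootstrap to $C$ regular, then to $L$ regular via the double centraliser, and finish with ``regular plus primitive implies the trivial point stabiliser is maximal, hence $L$ is cyclic of prime order.'' All of these steps are valid: $L$ does normalise (indeed centralise) $C_G(L)$, so the $C$-orbits genuinely form a system of imprimitivity, and the final maximality argument is sound. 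What your route buys is a self-contained structural proof that never needs to identify the centraliser of an $n$-cycle; what the paper's route buys is brevity, since a single prime-order centralising element already carries all the information needed for both parts.
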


\begin{proof}
	Suppose $x \in C_G(L)$ has prime order $r$. Then the orbits of $x$ are permuted by $L$. If $L$ is primitive, there is only one such orbit. If $L$ is transitive, all orbits must have the same size and the result follows.
\end{proof} 

To begin the proof of \cref{thm:1'}, consider $P\in\Syl_p(A_n)$. Clearly $P\in \Syl_p(S_n)$ since $p$ is odd. Let $z\in Z(P)$ be an element with $r\leq p-1$ fixed points where $n=kp+r$. Then the probability that $P\cap P^x \ne 1$, for $x$ uniform in $S_n$, is at most the probability that there exists a common element centralizing both $z$ and $z^x$. By \cref{thm:EG}, this goes to 0 as long as $p/n^{1/2}\to 0$.

If $p$ is fixed, clearly this holds, so it will be assumed that $p$ is increasing. If say $p \log p < n^{1/2}$, the result similarly holds (although we will not use this fact). 
If $p\mid n$, then $r=0$ and the result holds. We use the following result to deduce the result from the case $r=0$.

\begin{lemma}\label{lem:special}
	$f'(n,p) \le \max\{f'(\lambda p, p) \mid 1 \le \lambda \le n/p\}$. 
\end{lemma}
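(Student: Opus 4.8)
The plan is to reduce the existence of a common order-$p$ element in the centraliser to the same problem inside a \emph{smaller} symmetric group acting on a multiple of $p$ points. By the remark following \cref{thm:EG}, $f'(n,p)$ is the probability that two independent uniformly random elements $y,z$ of the class $\cC$ of elements with $k:=\lfloor n/p\rfloor$ many $p$-cycles and $r:=n-kp<p$ fixed points satisfy: some $w\in S_n$ of order $p$ commutes with both $y$ and $z$. Write $\Omega=\{1,\dots,n\}$, and let $\operatorname{supp}$ and $\operatorname{Fix}$ denote support and fixed-point set.

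First I would localise any such $w$. Since $w$ commutes with $y$ it permutes $\operatorname{Fix}(y)$; as $w$ has order $p$ and $|\operatorname{Fix}(y)|=r<p$, a nontrivial $p$-element cannot move fewer than $p$ points, so $w$ fixes $\operatorname{Fix}(y)$ pointwise, and likewise $\operatorname{Fix}(z)$. Hence $\operatorname{supp}(w)\subseteq\operatorname{supp}(y)\cap\operatorname{supp}(z)$. Moreover, for a single $p$-cycle $c$ of $y$: if $w$ fixes a point of $c$, then (using that $w$ preserves $c$ setwise and that $w|_c$ lies in the fixed-point-free cyclic group $\langle y|_c\rangle$) one gets $w|_c=\mathrm{id}$. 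Thus each $p$-cycle of $y$ is either contained in $\operatorname{supp}(w)$ or fixed pointwise by $w$, so $\operatorname{supp}(w)$ is a union of $p$-cycles of $y$, and symmetrically of $z$.

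Next I would pass to a maximal common block. The union of two sets that are each simultaneously a union of $p$-cycles of $y$ and of $p$-cycles of $z$ is again of this form, so there is a unique maximal such set $V$, with $|V|=\lambda p$ for some $0\le\lambda\le k$. The localisation step shows $\operatorname{supp}(w)\subseteq V$ for every centralising $w$ of order $p$, and that $w|_V$ is an order-$p$ element of $\Sym(V)\cong S_{\lambda p}$ commuting with $y|_V$ and $z|_V$; in particular the whole event forces $\lambda\ge 1$. Now I would condition on $V=S$ for a fixed set $S$ with $|S|=\lambda p$. This event is the conjunction of (i) ``$S$ is a union of $p$-cycles of both $y$ and $z$'' and (ii) a maximality condition depending only on $y,z$ restricted to $\Omega\setminus S$. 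Because $\cC$ is a single conjugacy class, conditioning on (i) makes $y|_S$ uniform among products of $\lambda$ disjoint $p$-cycles on $S$ and independent of $y|_{\Omega\setminus S}$, and similarly for $z$; since (ii) involves only the complementary factors, the pair $(y|_S,z|_S)$ remains a pair of independent uniform elements of the relevant class in $\Sym(S)\cong S_{\lambda p}$. Hence the conditional probability that some order-$p$ element of $\Sym(S)$ centralises $\langle y|_S,z|_S\rangle$ is exactly $f'(\lambda p,p)$. Summing over $S$,
\[
f'(n,p)\le\sum_{\lambda\ge 1}\ \sum_{|S|=\lambda p}\Prob[V=S]\,f'(\lambda p,p)\le\max_{1\le\lambda\le n/p}f'(\lambda p,p),
\]
as claimed.

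The main obstacle is the conditional-distribution bookkeeping in the third step: one must verify carefully that conditioning on $V=S$ genuinely factorises, leaving $(y|_S,z|_S)$ uniformly and independently distributed over products of $\lambda$ $p$-cycles, so that the conditional probability is \emph{precisely} $f'(\lambda p,p)$ rather than merely bounded by some nearby quantity. The localisation in the first step (ensuring $\operatorname{supp}(w)$ is a union of full cycles of both $y$ and $z$) is what makes this factorisation clean, and the hypothesis $r<p$ is used there in an essential way.
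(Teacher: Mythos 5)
Your proof is correct and follows essentially the same route as the paper: you localise any common order-$p$ centralising element to a block of size $\lambda p$, condition on that block, and identify the conditional probability as exactly $f'(\lambda p,p)$. Your maximal set $V$ coincides with the complement of the paper's $\Omega(y)$ (the union of $\langle x,y\rangle$-orbits of size divisible by $p$), and the factorisation/uniformity claim you flag as the delicate step is precisely the assertion the paper makes when it says that $y'$ ranges uniformly over fixed-point-free order-$p$ permutations of $\Omega'$ as $y$ ranges over $\Delta(\Omega)$.
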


\begin{proof}
	Let $G=S_n$. We partition the conjugacy class $x^G$ of $x$ into disjoint subsets. If $y \in x^G$, let $H = H_y = \langle x, y \rangle$ and $\Omega=\Omega(y)$ denote the union of all $H$-orbits of size prime to $p$. Let $\Delta(\Omega)$ denote the set of conjugates $y$ of $x$ with $\Omega(y)$ a fixed subset. 

	Note that since $x$ has fewer than $p$ fixed points, $H$ has fewer than $p$ orbits on $\Omega$. Suppose $w \in C_G(H)$ has order $p$. Then $w$ acts on $\Omega$. Since $H$ has fewer than $p$ orbits, $w$ acts on each $H$ orbit contained in $\Omega$. By \cref{lem:elem}, $w$ acts trivially on $\Omega$.

	Let $\Omega'$ be the complement of $\Omega$. If $\Omega'$ is empty, then no element of order $p$ centralizes $H$. Let $x'$ and $y'$ denote the restriction of $x$ and $y$ to $\Omega'$ (these are fixed point free permutations on $\Omega'$). Note that as $y$ ranges over $\Delta(\Omega)$, $y'$ ranges uniformly over all fixed point free permutations of order $p$ on $\Omega'$. Since any $p$-element in $C_G(H)$ acts trivially on $\Omega$, it follows that the probability that $x$ and a random element of $\Delta(\Omega)$ are both centralized by an element of order $p$ is precisely $f'(|\Omega'|, p)$.   

	The result follows since $|\Omega'| = \lambda p$ for some $\lambda$. 
\end{proof}

By \cref{thm:EG} it follows that $\lim_{p \rightarrow \infty} f'(\lambda p,p) = 0$ and \cref{thm:1'} follows.  
 
\begin{remark}
	The argument shows that \cref{thm:1} holds uniformly in $2<p\le n$.  Using the ideas and results from \cite{EG22}, one could prove a more refined version of \cref{lem:special}. 
\end{remark}

\subsection{$n$ even, $p=2$}
In this case $Z=Z(P_n)$ is an elementary 2-group. The same holds for $Z(P_n^x)$. Let $z \in Z$ be an involution with no fixed points.   
Consider the partition $\Delta$ of $n$ into $\frac{n}{2}$ disjoint subsets of size $2$ which are the orbits of $z$. Let $A$ be the elementary abelian $2$-subgroup preserving each subset of size $2$. We will show that for a random $g$, the probability that $A \cap A^g \ne 1$ is bounded away from $0$ and so also for $P_n \cap P_n^g$.

Note that $g\Delta$ is a random partition of the same sort. Let $W$ be the number of matching pairs in $\Delta$ and $g \Delta$. Note that $W \ge 1$ if and only if $A \cap A^g \ne 1$.  
The arguments in \cite[\textsection 3]{DHolmes} show that, when $n$ is large, $W$ has a limiting $\operatorname{Poisson}(1/2)$ distribution
\[ \Prob(W=l) \to \frac{e^{-1/2}\cdot(1/2)^l}{l!}\quad\text{as }n\to\infty. \]
In particular,
\[ \Prob(W>0) \sim 1-e^{-1/2}\quad\text{as } n\to\infty, \]
and so the probability that $A \cap A^g \ne 1$ is uniformly bounded away from $0$.

\subsection{$n$ odd, $p =2$}   
We show that the $n$ even case implies the result for $n$ odd. Choose an involution $z \in Z(P_n)$ with exactly $1$ fixed point. Let $z^g$ be a random conjugate of $z$. Note that $\langle z, z^g \rangle$ is a dihedral group that has exactly $1$ orbit of odd size $k$. If $k < n$, then using the result for $n-k$, we see that the probability that $A \cap A^g \ne 1$ (with $A$ the elementary abelian subgroup preserving each orbit of $x$) is bounded uniformly away from $0$. The result now follows by noting that the probability that $zz^g$ is an $n$-cycle goes to $0$ as $n \rightarrow \infty$. 

\bigskip
\section{Double cosets of size $p|P_n|$}\label{sec:new5}
This section gives closed form formulas for the number of $p$-Sylow double cosets of $S_n$ of second smallest size. The main result is the following.

\begin{theorem}\label{thm:2nd-smallest}
	Let $p$ be a prime and $n\in\N$. Suppose the $p$-adic expansion of $n$ is $\sum_{i=0}^\infty a_ip^i$. Let $P_n\in\Syl_p(S_n)$. Then the number of $(P_n,P_n)$-double cosets in $S_n$ of size $p|P_n|$ is 
	\begin{align*}
		&\sum_{i=0}^\infty a_ia_{i+1}+\sum_{i=2}^\infty a_i&&\text{if }p=2,\\
		&\frac{|N_{S_n}(P_n):P_n|}{p}\sum_{i=0}^\infty a_{i+1}\biggl(\binom{p+a_i}{p}(p-1)^{i(p-1)}(p-2)!-1\biggr)&&\text{if }p>2.
	\end{align*}
\end{theorem}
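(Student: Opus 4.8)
The plan is to convert the double-coset count into a Sylow-subgroup count, reduce that to a sum over the maximal subgroups of $P_n$, and then read off each summand from the wreath-product structure of $P_n$. Since $|P_nxP_n|=|P_n|^2/|P_n\cap P_n^x|$ and $|P_n\cap P_n^x|$ is constant on each double coset, the double cosets of size $p|P_n|$ are exactly those on which $|P_n\cap P_n^x|=|P_n|/p$. I would count the set $\{x\in S_n:|P_n\cap P_n^x|=|P_n|/p\}$ in two ways: each qualifying double coset contributes $p|P_n|$ elements, while for each qualifying $Q\in\Syl_p(S_n)$ the equation $P_n^x=Q$ has exactly $|N_{S_n}(P_n)|$ solutions $x$, all with $P_n\cap P_n^x=P_n\cap Q$. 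Equating the two counts gives
\[ \#\{\text{double cosets of size }p|P_n|\}=\frac{|N_{S_n}(P_n):P_n|}{p}\cdot\bigl|\{Q\in\Syl_p(S_n):|P_n\cap Q|=|P_n|/p\}\bigr|, \]
which accounts for the prefactor in the statement and reduces to the stated count when $p=2$, since then $|N_{S_n}(P_n):P_n|=1$ by \eqref{eq:normaliser}.

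Next I would organise the Sylow count by the intersection subgroup. If $|P_n\cap Q|=|P_n|/p$ then $D:=P_n\cap Q$ is an index-$p$, hence normal, subgroup of $P_n$, and $P_n,Q$ are Sylow $p$-subgroups of $N_{S_n}(D)$; being a normal $p$-subgroup, $D$ lies in every Sylow $p$-subgroup of $N_{S_n}(D)$, and two distinct such Sylow subgroups meet $P_n$ in the maximal subgroup $D$ exactly. Therefore
\[ \bigl|\{Q:|P_n\cap Q|=|P_n|/p\}\bigr|=\sum_{D}\bigl(|\Syl_p(N_{S_n}(D)/D)|-1\bigr), \]
summed over the maximal subgroups $D$ of $P_n$, so the problem becomes locating the $D$ with more than one Sylow $p$-subgroup in $N_{S_n}(D)/D$ and evaluating the count there.

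Using $P_n\cong\prod_j P_{p^j}^{a_j}$, the dominant family is as follows. For each $i\ge0$ and each of the $a_{i+1}$ blocks $\beta$ of size $p^{i+1}$, take $D$ to be the base group $(P_{p^i})^p$ of $W_\beta\cong P_{p^i}\wr C_p$ times all the other direct factors. On the $p$ sub-blocks of $\beta$ together with the $a_i$ genuine blocks of size $p^i$, the subgroup $D$ equals $(P_{p^i})^{p+a_i}$, so $N_{S_n}(D)$ contains $N_{S_{p^i}}(P_{p^i})\wr S_{p+a_i}$ acting on these $p+a_i$ blocks, while the remaining factors become $p'$-groups after passing to $N_{S_n}(D)/D$. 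Since $N_{S_{p^i}}(P_{p^i})/P_{p^i}\cong C_{p-1}^{\,i}$ by \eqref{eq:normaliser} and $p+a_i<2p$, a direct Sylow count in $C_{p-1}^{\,i}\wr S_{p+a_i}$ yields $|\Syl_p(N_{S_n}(D)/D)|=(p-1)^{i(p-1)}\binom{p+a_i}{p}(p-2)!$, which is exactly the bracketed summand for $p>2$. For $p=2$ this is $\binom{2+a_i}{2}-1$, which vanishes unless $a_i=1$ and then equals $2$; dividing by $p$ recovers the cross term $\sum_i a_ia_{i+1}$.

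The crux, and where $p=2$ genuinely differs, is completeness. For $p>2$ I expect the nontrivial factor $C_{p-1}$ in \eqref{eq:normaliser} to rigidify every other maximal subgroup, so that $P_n$ remains normal in its $S_n$-normaliser and contributes nothing; the case $n=p^2$, where only $D=(P_p)^p$ contributes, is the model computation. For $p=2$ the collapse $C_{p-1}=1$ destroys this rigidity and produces one further maximal subgroup inside each block of size $2^i$ with $i\ge2$, whose normaliser quotient has three Sylow $2$-subgroups — modelled by the normal Klein four $V\trianglelefteq S_4$ with $S_4/V\cong S_3$ when $n=4$ — each contributing $1$ after dividing by $p$ and thereby producing the term $\sum_{i\ge2}a_i$. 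Isolating this second family precisely, computing its normalisers, and proving that no other maximal subgroup of $P_n$ contributes is the step I expect to be the hardest.
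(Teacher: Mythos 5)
Your reduction is sound and coincides with the paper's \cref{lem:Sylow} (specialised to $k=1$): the double-count giving the prefactor $\tfrac{|N_{S_n}(P_n):P_n|}{p}$, the passage to maximal subgroups $D=P_n\cap Q$, and the identity $d(p)=\sum_D\bigl(|\Syl_p(N_{S_n}(D)/D)|-1\bigr)$ are all correct. Your evaluation of the two families also matches the paper's: the count $(p-1)^{i(p-1)}\binom{p+a_i}{p}(p-2)!$ agrees with the paper's $|N_{S_M}(R_M):N_{S_M}(P_M)|$, and the $W\wr V$ family for $p=2$ is the paper's \cref{lem:regsub}. But the step you yourself flag as hardest is a genuine gap, and it is precisely the substance of the paper's proof: you must show that \emph{every other} maximal subgroup $D$ of $P_n$ satisfies $|\Syl_p(N_{S_n}(D)/D)|=1$. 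The number of maximal subgroups of $P_n$ is $\frac{p^r-1}{p-1}$ with $r$ the rank of $P_n/\Phi(P_n)$, which grows with $n$ (already $P_8\in\Syl_2(S_8)$ has seven maximal subgroups, of which only one contributes), so this is not a boundary check, and the heuristic that the $C_{p-1}$ factor ``rigidifies'' the rest is not an argument. As it stands you have only proved that the stated formula is a lower bound for the number of double cosets of size $p|P_n|$.

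The paper closes exactly this gap by a structural detour you are missing. If $Q\ne P$ and $R:=P\cap Q$ has index $p$ in $P$, then $P$ and $Q$ are both Sylow $p$-subgroups of $N_{S_n}(R)$, so $O_p(N_{S_n}(R))\le P\cap Q=R$ and hence $R$ is a \emph{radical} $p$-subgroup of $S_n$. (In your language: a maximal $D$ with $O_p(N_{S_n}(D))=P$ has $P$ normal in $N_{S_n}(D)$ and contributes $0$, so only radical $D$ can contribute.) The Alperin--Fong classification \cite{AlperinFong} then writes $R=R_1\times\cdots\times R_t$ with each $R_i$ an iterated wreath product $A_1\wr\cdots\wr A_k$ of elementary abelian groups $C_p^{e_j}$ acting regularly, and the inequality $\log_p|R_i|\le \log_p(|M_i|!)$, driven by $e_j\le p^{e_j-1}$ with equality only for $e_j=1$ or $p=e_j=2$, combined with $|P:R|=p$, forces either all $e_j=1$ (your first family) or $p=2$ with a single $e_k=2$ at the top (your second family). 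Some such classification-plus-extremality argument --- or a direct induction over the maximal subgroups of the iterated wreath product --- is indispensable to turn your lower bound into the claimed equality.
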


We remark that \cref{thm:2nd-smallest} is proved following a useful lemma for general groups which yields a theorem for the number of maximal size double cosets. The proof also yields an algorithmic description of double cosets of size $p|P_n|$. Recall from \eqref{eq:normaliser} that $|N_{S_n}(P_n):P_n|=\prod_i (p-1)^{ia_i}\cdot a_i!$, when the $p$-adic expansion of $n$ is $\sum_{i=0}^\infty a_ip^i$.

\begin{lemma}\label{lem:Sylow}
	Let $G$ be a finite group, $p$ be a prime and $P\in\Syl_p(G)$. For $k\in\N$ such that $p^k\le|P|$, let 
	\[d(p^k):=\bigl|\{Q\in\Syl_p(G):|P:P\cap Q|=p^k\}\bigr|.\] 
	Then the number of $(P,P)$-double cosets in $G$ of size $|P|p^k$ is 
	\[ \frac{d(p^k)}{p^k}|N_G(P):P|. \]
\end{lemma}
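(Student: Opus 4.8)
The plan is to count elements rather than to attempt a direct bijection. By the size formula \eqref{eq:dc-size} with $H=K=P$, a double coset $PxP$ has size $|P|p^k$ if and only if $|P:P\cap xPx^{-1}|=p^k$, that is, if and only if the Sylow subgroup $Q=xPx^{-1}$ lies in
\[ \mathcal{Q} := \{Q\in\Syl_p(G) : |P:P\cap Q|=p^k\}, \qquad |\mathcal{Q}|=d(p^k). \]
Accordingly I would introduce $S:=\{x\in G : |PxP|=|P|p^k\}$. Since the condition defining $S$ depends only on the double coset $PxP$, the set $S$ is a disjoint union of the double cosets of size exactly $|P|p^k$, each contributing $|P|p^k$ elements; hence the number of such double cosets equals $|S|/(|P|p^k)$, and it remains to compute $|S|$.

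To find $|S|$, I would use the conjugation map $c\colon G\to\Syl_p(G)$, $c(x)=xPx^{-1}$, so that $S=c^{-1}(\mathcal{Q})$. The map $c$ is surjective because $G$ acts transitively on $\Syl_p(G)$, and each fibre is a single left coset of $N_G(P)$: if $x_0Px_0^{-1}=Q$, then $xPx^{-1}=Q$ exactly when $x_0^{-1}x\in N_G(P)$, so $c^{-1}(Q)=x_0N_G(P)$ has precisely $|N_G(P)|$ elements. Summing the constant fibre size over the $d(p^k)$ subgroups in $\mathcal{Q}$ gives $|S|=d(p^k)\,|N_G(P)|$, whence
\[ \frac{|S|}{|P|p^k}=\frac{d(p^k)\,|N_G(P)|}{|P|p^k}=\frac{d(p^k)}{p^k}\,|N_G(P):P|, \]
as claimed.

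I do not expect a genuine obstacle here: the whole argument is a fibre count resting only on \eqref{eq:dc-size} and the orbit--stabiliser description of $\Syl_p(G)$. The one point worth flagging is precisely why I count elements of $S$ instead of building a bijection between double cosets and $P$-orbits on $\mathcal{Q}$. As the representative $x'=p_1xp_2$ ranges over all of $PxP$ (with $p_1,p_2\in P$), the image $x'Px'^{-1}=p_1(xPx^{-1})p_1^{-1}$ ranges over the entire $P$-conjugacy orbit of $xPx^{-1}$ rather than over a single Sylow subgroup; moreover distinct double cosets can yield the same $P$-orbit, the fibres of the resulting map having size $|N_G(P):P|$. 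This is exactly the origin of the normaliser index in the statement, and the virtue of the element-count is that this factor emerges automatically, without any need to analyse these fibres by hand.
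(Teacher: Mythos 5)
Your argument is correct, and it reaches the count by a genuinely different route from the paper. The paper works at the level of double cosets and Sylow subgroups: given $PgP$ of size $|P|p^k$, it writes $PgP=\bigcup_{i=1}^{p^k}x_igP$, forms the subgroups $Q_i=x_igPg^{-1}x_i^{-1}$, and must then verify two nontrivial points --- that the $Q_i$ are pairwise distinct (which uses $N_G(gPg^{-1})\cap P\le gPg^{-1}$, i.e.\ that $gPg^{-1}$ is the unique Sylow $p$-subgroup of its normaliser) and that each $Q$ with $|P:P\cap Q|=p^k$ arises in this way from exactly $|N_G(P):P|$ double cosets. The lemma then follows by double-counting the incidences between double cosets and Sylow subgroups. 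Your proof instead counts elements of $G$: the set $S=\{x:|PxP|=|P|p^k\}$ is partitioned on one hand into the relevant double cosets, each of size $|P|p^k$, and on the other hand into the fibres of $x\mapsto xPx^{-1}$ over $\mathcal{Q}$, each of which is a left coset of $N_G(P)$ of size $|N_G(P)|$ with no further argument needed. This sidesteps both of the paper's verifications, since the uniform fibre size is immediate from the transitivity of conjugation on $\Syl_p(G)$; the trade-off is that the paper's proof exhibits the explicit correspondence between a double coset and the $P$-orbit $\{Q_1,\dotsc,Q_{p^k}\}$, which the authors exploit afterwards (``It is no coincidence that $p^k$ divides $d(p^k)$'') and which feeds into the algorithmic description of the double cosets of size $p|P_n|$ mentioned at the end of the section. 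Your closing remark correctly identifies exactly this distinction.
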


\begin{proof}
	Let $g\in G$ and suppose $|PgP|=|P|p^k$. There exist $x_1,\ldots,x_{p^k}\in P$ such that $PgP=\bigcup_{i=1}^{p^k}x_igP$. Setting $Q_i:=x_igPg^{-1}x_i^{-1}\in\Syl_p(G)$, we obtain $|P:P\cap Q_i|=p^k$ for $i=1,\ldots,p^k$. 
	Suppose that $Q_i=Q_j$. Then 
	\[x_j^{-1}x_i\in N_G(gPg^{-1})\cap P\le gPg^{-1},\] 
	because $gPg^{-1}$ is the unique Sylow $p$-subgroup of $N_G(gPg^{-1})$. Hence, $x_ig\in x_jgP$ and $x_igP=x_jgP$. Consequently, $i=j$. This shows that each $(P,P)$-double coset gives rise to $p^k$ Sylow $p$-subgroups $Q$ of $G$ with $|P:P\cap Q|=p$. 
	Now for each $h\in N_G(P)$ we have 
	\[|PghP|=|PghPh^{-1}g^{-1}|=|PgPg^{-1}|=|PgP|\] 
	and $Q_i=x_ighPh^{-1}g^{-1}x_i^{-1}$. 
	Therefore, each $Q$ arises from $|N_G(P):P|$ double cosets. 
\end{proof}

It is no coincidence that $p^k$ divides $d(p^k)$. In fact, $P\cap Q$ is the stabilizer of $Q$ under the action of $P$ on $\Syl_p(G)$ by conjugation. Hence, $d(p^k)$ is $p^k$ times the number of orbits of size $p^k$. 

For the rest of this section, $G=S_n$.

\begin{lemma}\label{lem:intransitive}
	Let $p$ be a prime, $n\in\N$ and let $P\in\Syl_p(S_{p^n})$. Then $P$ has a unique maximal subgroup of the form $P_1\times\ldots\times P_p$, where $N_1\cup\ldots\cup N_p=\{1,\ldots,p^n\}$, $|N_1|=\ldots=|N_p|=p^{n-1}$ and $P_i\in\Syl_p(\Sym(N_i))$ for all $i\in\{1,\dotsc,p\}$. 
\end{lemma}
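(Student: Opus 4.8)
The plan is to show that every subgroup $H\le P$ of the stated form equals the base group of the natural wreath decomposition $P\cong P_{p^{n-1}}\wr C_p$, and that this base group is itself of the stated form and is maximal. First I would record orders. Writing $|P_{p^j}|=p^{m(j)}$, the recursion $m(n)=p\,m(n-1)+1$ recalled in \cref{sec:sn-sylow} shows that any $H=P_1\times\cdots\times P_p$ of the stated form has order $\prod_i|P_i|=p^{p\,m(n-1)}=p^{m(n)-1}=|P|/p$. Thus $H$ has index $p$ in the $p$-group $P$, so $H$ is automatically maximal and is normal in $P$. Since each $P_i\in\Syl_p(\Sym(N_i))$ is transitive on $N_i$, the $H$-orbits are exactly $N_1,\dots,N_p$; as $H\trianglelefteq P$ and $P$ is transitive on $\{1,\dots,p^n\}$, these $p$ orbits (each of size $p^{n-1}$) form a block system for $P$.

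The heart of the matter is to prove that $P$ admits a unique block system with $p$ blocks of size $p^{n-1}$. Granting this, the orbit partition of any such $H$ must coincide with the partition $\{N_1^\circ,\dots,N_p^\circ\}$ underlying the standard wreath decomposition, so that $H\le P\cap(\Sym(N_1^\circ)\times\cdots\times\Sym(N_p^\circ))=:B$, the base group; since $|B|=|P|/p=|H|$ this forces $H=B$, giving both existence and uniqueness. To establish uniqueness of the block system I would invoke the standard bijection between block systems of a transitive group and the overgroups of a point stabiliser: after fixing a point $\alpha$, block systems with $p$ blocks correspond to index-$p$ (hence maximal) subgroups $M$ with $P_\alpha\le M\le P$, the block through $\alpha$ being $\alpha^M$. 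It therefore suffices to show that $P$ has a \emph{unique} maximal subgroup containing $P_\alpha$.

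By the Frattini correspondence, the maximal subgroups of the $p$-group $P$ are the preimages of the hyperplanes of $V:=P/\Phi(P)$, so it is enough to prove that $\overline{P_\alpha}:=P_\alpha\Phi(P)/\Phi(P)$ is itself a hyperplane. From the recursion $P\cong (P_{p^{n-1}})^{\times p}\rtimes C_p$ one computes $P^{\mathrm{ab}}\cong (P_{p^{n-1}})^{\mathrm{ab}}\times C_p$ (the cyclic top identifies the $p$ base factors into a single copy of $(P_{p^{n-1}})^{\mathrm{ab}}$ and contributes one further $C_p$); hence $\dim_{\F_p}V=n$, and since $B$ is maximal its image $\overline B$ is a hyperplane. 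Taking $\alpha$ in the first block, one has $P_\alpha\le B$ and in fact $P_\alpha$ contains the entire direct factor $P^{(2)}\cong P_{p^{n-1}}$ of $B$. As a single full base factor already surjects onto $B/\Phi(P)\cong P_{p^{n-1}}/\Phi(P_{p^{n-1}})$, we get $\overline{P_\alpha}=\overline B$, a hyperplane, so $B$ is the unique maximal subgroup containing $P_\alpha$ and the lemma follows. I expect the main obstacle to be precisely this last computation: pinning down $P/\Phi(P)$ for the iterated wreath product and checking that the image of the point stabiliser fills out the hyperplane $\overline B$. Should the direct computation prove delicate, an induction on $n$ gives an alternative route to the uniqueness of the $p$-block system.
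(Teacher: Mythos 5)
Your argument is correct, but it takes a genuinely different route from the paper's. The paper's proof is a short counting argument: if $H$ and $Q$ are two distinct subgroups of the stated form, both have index $p$ and hence are normal in the $p$-group $P$, so $HQ=P$ and $|H\cap Q|=|H|/p$; if the underlying partitions $\{N_i\}$ and $\{M_j\}$ were different, the common refinement $\{N_i\cap M_j\}$ would have more than $p$ nonempty parts, the orbits of $H\cap Q$ would lie inside those parts, and an order count would force $|H\cap Q|<|H|/p$, a contradiction. You instead reduce the lemma to the uniqueness of the block system of $P$ with $p$ blocks, and prove that via the correspondence between block systems and overgroups of a point stabiliser together with the Frattini quotient: since conjugation by the wreathing element acts trivially on $P/\Phi(P)$ and permutes the $p$ base factors transitively, all base factors have the same image there, so $\overline{P_\alpha}$ (which contains a full base factor) already fills out the hyperplane $\overline{B}$, making $B$ the unique maximal subgroup above $P_\alpha$. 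Your approach uses more machinery but isolates the structural reason for uniqueness (a unique $p$-block system) and yields a reusable fact about the Frattini quotient of iterated wreath products; the paper's route is shorter and entirely self-contained. One small remark: the computation $\dim_{\mathbb{F}_p}P/\Phi(P)=n$ is not actually needed, since maximality of $B$ already guarantees that $\overline{B}$ is a hyperplane, which is all your argument uses.
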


\begin{proof}
	It is clear that $P$ has such a subgroup $H=P_1\times\ldots\times P_p$ corresponding to
	\[ N_i:=\{(i-1)p^{n-1}+1, (i-1)p^{n-1}+2, \dotsc, ip^{n-1}\}, \]
	for instance.
	Suppose that there is another such subgroup $Q=Q_1\times\ldots\times Q_p$ corresponding to a partition $M_1\cup\ldots\cup M_p=\{1,\ldots,p^n\}$. Since $P=HQ$, we have that $|H\cap Q|=\frac{1}{p}|H|$. By way of contradiction, suppose that $\{N_1,\ldots,N_p\}\ne\{M_1,\ldots,M_p\}$. Then at least two of the $N_i$ split into smaller subsets of the form $N_i\cap M_j$. Since the subsets $N_i\cap M_j$ are the orbits of $H\cap Q$, this leads to the contradiction $|H\cap Q|<\frac{1}{p}|H|$. 
	Hence, without loss of generality, $N_i=M_i$ for all $i\in\{1,\dotsc,p\}$, and $P_i=P\cap\Sym(N_i)=Q_i$ and $H=Q$.
\end{proof}

\begin{lemma}\label{lem:regsub}
	Let $n\in\N$ with $n\ge 2$, and let $P\in\Syl_2(S_{2^n})$. Then $P$ has unique maximal subgroup of the form $W\wr V$, where $W$ is a Sylow $2$-subgroup of a symmetric group on $2^{n-2}$ letters and $V\cong C_2^2$ acts regularly on the four copies of $W$. 
\end{lemma}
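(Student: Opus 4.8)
The plan is to exploit the recursive wreath-product structure of $P$. Writing $W:=\Syl_2(S_{2^{n-2}})$ and grouping the top two levels of the iterated wreath product, associativity of the wreath product gives $P\cong W\wr(C_2\wr C_2)$, where $C_2\wr C_2\cong D_8$ acts imprimitively on the set $\mathcal N=\{N_1,\dots,N_4\}$ of four ``level-two'' blocks of size $2^{n-2}$, and $W$ acts on each block. Let $\pi\colon P\to\Sym(\mathcal N)$ be the action on these blocks and $B:=\ker\pi=W^4$ the base group, so $\pi(P)=D_8$ and $|B|=2^{2^n-4}=|P|/8$. A direct inspection inside $D_8$ (acting on its four leaves with the level-one blocking) shows it contains a \emph{unique} subgroup $V\cong C_2^2$ acting regularly, namely the ``diagonal'' Klein four group; its two other index-two subgroups, a cyclic $C_4$ and the base Klein four group, do not act regularly. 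Taking $H:=\pi^{-1}(V)$ then yields a subgroup $H=W\wr V$ of the required shape, of index $[D_8:V]=2$ in $P$ and hence maximal. This settles existence.

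For uniqueness I would take an arbitrary subgroup $Q\le P$ of the stated form, attached to a partition $\mathcal M=\{M_1,\dots,M_4\}$ into four blocks of size $2^{n-2}$, with base group $B':=\ker(Q\to\Sym(\mathcal M))=\prod_i\Syl_2(\Sym(M_i))$ and $Q/B'\cong V'\cong C_2^2$ acting regularly on $\mathcal M$. Since $|Q|=|B'|\cdot4=2^{2^n-2}=|P|/2$, $Q$ is automatically maximal. The whole argument reduces to showing that $\mathcal M$ coincides with the level-two partition $\mathcal N$: once this is known, $B'\le B$ (both are the elements of $P$ fixing every $N_i$ setwise) and $|B'|=2^{2^n-4}=|B|$ force $B'=B\le Q$; then $Q/B$ is an index-two subgroup of $D_8\cong P/B$ that maps isomorphically onto the regular group $V'$, so by the uniqueness of the regular $C_2^2$ in $D_8$ it must be $V$, whence $Q=\pi^{-1}(V)=H$.

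It therefore remains to prove $\mathcal M=\mathcal N$, which I expect to be the main obstacle. The key assertion is
\[ Q \text{ has a \emph{unique} invariant partition into four blocks of size } 2^{n-2}. \]
Granting this, the proof concludes quickly: as $P$ acts as the full automorphism group of the rooted binary tree on $2^n$ leaves, automorphisms preserve levels, so $\mathcal N$ is $P$-invariant and a fortiori $Q$-invariant; being a four-block partition of the correct size, uniqueness forces $\mathcal M=\mathcal N$. To establish the displayed claim I would use the standard correspondence between block systems of the transitive group $Q$ and the subgroups $K$ with $Q_\alpha\le K\le Q$: a partition into four blocks of size $2^{n-2}$ corresponds to a $K$ with $[Q:K]=4$, and $\mathcal M$ itself corresponds to $K=B'$. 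So it suffices to show $B'$ is the \emph{only} index-four overgroup of $Q_\alpha$.

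Fixing $\alpha\in M_1$ and writing $W':=\Syl_2(\Sym(M_1))$, one has $Q_\alpha=(W')_\alpha\times(W')^3$, and every subgroup of $B'\cong(W')^4$ containing $Q_\alpha$ has the form $U\times(W')^3$ with $(W')_\alpha\le U\le W'$. Given such a $K$, the subgroup $K\cap B'=U\times(W')^3$ is normal in $K$, while conjugation by any $k\in K$ permutes the four factors of $B'$ according to the image of $k$ in $V'$. Here is exactly where regularity enters: since $V'$ acts \emph{regularly} (hence transitively) on the four factors, if $K\not\le B'$ then some $k$ carries factor $1$ to a different factor, and normality of $K\cap B'$ forces $U=W'$, i.e.\ $B'\le K$, so that $K=B'$ by the index comparison $[Q:K]=4=[Q:B']$; and if $K\le B'$ the same index comparison gives $K=B'$ directly. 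This yields uniqueness of the partition and completes the lemma. It is worth noting that this is precisely the step that fails for a \emph{non}-regular Klein four complement, where the intransitive overgroups $U\times(W')^3$ survive and produce the direct-product maximal subgroup of \cref{lem:intransitive} instead.
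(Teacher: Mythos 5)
Your proof is correct, and while the existence half coincides with the paper's (peel off the top two levels so that $P\cong W\wr(C_2\wr C_2)$ and use the unique regular Klein four-subgroup of $D_8$), your uniqueness argument takes a genuinely different route. The paper proves uniqueness by a global double count over all of $S_{2^n}$: the number of subgroups of the form $W_1\times\cdots\times W_4$ is $\frac{2^n!}{(2^{n-2}!)^4\,4!}\,t_{n-2}^4=t_n/3$ (with $t_k$ the number of Sylow $2$-subgroups of $S_{2^k}$), each such subgroup lies in exactly three Sylow $2$-subgroups of $S_{2^n}$ (one for each Sylow $2$-subgroup of $S_4$ on the four blocks), so each of the $t_n$ Sylow $2$-subgroups contains exactly one, and the extension to $W\wr V$ is then forced. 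You instead work entirely inside the candidate subgroup $Q$: regularity of $V'$ gives $Q_\alpha=(W')_\alpha\times(W')^3$, the overgroup/block-system correspondence reduces the problem to showing $B'$ is the only index-four overgroup of $Q_\alpha$, and the normality of $K\cap B'=U\times(W')^3$ under an element inducing a fixed-point-free permutation of the factors forces $U=W'$; hence the four-block system is unique, must equal the $P$-invariant level-two partition, and the quotient argument in $P/B\cong D_8$ finishes. I checked the pivotal steps (that any subgroup of $(W')^4$ containing $1\times(W')^3$ splits as $U\times(W')^3$, and that $D_8$ has a unique regular Klein four-subgroup) and they hold. Your route is more self-contained -- it needs no count of Sylow subgroups -- and it isolates exactly where regularity enters, which explains cleanly why the non-regular complement instead yields the direct-product maximal subgroup of \cref{lem:intransitive}; the paper's count, on the other hand, gives as a byproduct that each Sylow $2$-subgroup contains a unique subgroup of the form $W_1\times\cdots\times W_4$ with no hypothesis on what sits above it.
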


\begin{proof}
	We have $P=W\wr R\cong (W_1\times\ldots\times W_4)\rtimes R$ where $\{1,\ldots,2^n\}=N_1\sqcup\ldots\sqcup N_4$, $W_i\in\Syl_2(\Sym(N_i))$ and $R\cong C_2\wr C_2$ is a Sylow 2-subgroup of $\Sym(\{N_1,\ldots,N_4\})$; note that $W=1$ when $n=2$. Taking $V$ to be the Klein four-subgroup of $R$ gives us one subgroup $H=W\wr V$ of the desired form. 
	
	To prove uniqueness, we use a counting argument. The number of partitions of $\{1,\ldots,2^n\}$ into four subsets of size $2^{n-2}$ each is 
	\[\frac{2^n!}{(2^{n-2}!)^4 4!}.\]
	The number of Sylow $2$-subgroups of $S_{2^k}$ is the $2'$-part of $2^k!$, call it $t_k$. Therefore, the number of subgroups of $S_{2^n}$ of the form $W:=W_1\times\ldots\times W_4$ where each $W_i$ is a Sylow $2$-subgroup of a symmetric group on $2^{n-2}$ letters, is 
	\[\frac{2^n!}{(2^{n-2}!)^4 4!}\cdot t_{n-2}^4=\frac{t_n}{3}.\]
	The Sylow 2-subgroups of $S_{2^n}$ containing $W$ correspond one-to-one to the three Sylow 2-subgroups of $S_4$ (permuting the $W_i$). Since $t_n$ is the number of Sylow $2$-subgroups of $S_{2^n}$, each Sylow $2$-subgroup $P$ of $S_{2^n}$ contains a unique subgroup of the form $W$. Finally, there is only one way to extend $W$ to $W\wr V$ where $V$ permutes the $W_i$ regularly.  
\end{proof}

\begin{proof}[Proof of \cref{thm:2nd-smallest}]
	We write $G=S_n$ and $P=P_n$ for short.
	There exists a partition
	\[\{1,\ldots,n\}=N_1\cup\ldots\cup N_s\]
	and $P_i\in\Syl_p(\Sym(N_i))$ such that $P=P_1\times\ldots\times P_s$, where $s:=\sum_{i=0}^\infty a_i$.
	By \cref{lem:Sylow}, it suffices to determine the number of $Q\in\Syl_p(G)$ such that $|P:P\cap Q|=p$.
	In this case $R:=P\cap Q$ is a radical subgroup of $G$, i.~e. $P$ is the largest normal $p$-subgroup of $N_G(P)$. The radical $p$-subgroups of symmetric groups were classified by Alperin--Fong~\cite[(2A)]{AlperinFong}; see also \cite{Fong} for corrections. 
	
	It turns out that there is a refined partition $M_1\cup\ldots\cup M_t$ of $\{1,\ldots,n\}$ where each $M_i$ is a subset of some $N_j$, and $R_i\le \Sym(M_i)$ such that $R=R_1\times\ldots\times R_t$. Moreover, each $R_i$ is an iterated wreath product of the form $A_1\wr\ldots\wr A_k$, where every $A_j\cong C_p^{e_j}$ (for some $e_j\in\N$) is elementary abelian and acts regularly on its support. (Note the case $k=0$ i.e.~$R_i=1$ is allowed.) Hence, $|M_i|=p^{e_1+\ldots+e_k}$ and
	\[ \log_p|R_i| = e_1p^{e_2+\ldots+e_k} + e_2p^{e_3+\ldots+e_k} + \ldots + e_k \le p^{e_1+\dotsc+e_k-1} + p^{e_2+\dotsc+e_k-1} + \cdots + p^{e_k-1}, \] 
	since $e_i\le p^{e_i-1}$ (with equality if and only if $e_i=1$ or $p=e_i=2$). Thus
	\[ \log_p|R_i| \le \sum_{i=0}^{e_1+\cdots+e_k-1} p^i = \sum_{j=1}^\infty \left\lfloor \frac{|M_i|}{p^j} \right\rfloor = \log_p(|M_i|!). \]
	Since $|P:R|=p$, we conclude that \textit{either} $e_1=\ldots=e_k=1$, \textit{or} $p=2$ and $e_1=\ldots=e_{k-1}=1$ and $e_k=2$. 
	
	Suppose first that $p>2$. Then each $R_i$ is a Sylow $p$-subgroup of the symmetric group on its support. This means that $N_i=M_i$ and $R_i=P_i$ for all but one $i$. 
	For $i\ge 0$, we have $a_{i+1}$ choices to fix a factor $P_j$ of $P$ such that $|N_j|=p^{i+1}$. The only way to decompose $N_j$ into some $M_k$ is to take $p$ disjoint subsets each of size $p^i$, say $N_j=M_{k_1}\cup\ldots\cup M_{k_p}$. 
	By \cref{lem:intransitive}, the $M_{k_l}$ are uniquely determined up to order by the wreath structure of $P_j$. 
	Since $P_r=R_r$ for all $r\ne j$, we have $a_i+p$ sets $M_l$ of size $p^i$ in total. Each Sylow $p$-subgroup $Q$ containing $R$ combines $p$ of those $M_l$ to one set of size $p^{i+1}$. We have $\binom{p+a_i}{p}$ possibilities to choose those sets $M_l$. For ease of notation, suppose that $M=M_1\cup\ldots\cup M_p$ have been chosen. Now we need to count how many Sylow $p$-subgroups of $S_M:=\Sym(M)$ contain $R_M:=R_1\times\ldots\times R_p$ with $R_i\in\Syl_p(\Sym(M_i))$. 
	Each such Sylow $p$-subgroup $P_M$ lies inside $N_{S_M}(R_M)\cong (R_1\rtimes C_{p-1}^i)\wr S_p$. On the other hand, $N_{S_M}(P_M)\cong P_M\rtimes C_{p-1}^{i+1}$. Hence, the number of Sylow $p$-subgroups of $S_M$ containing $R_M$ is 
	\[ |N_{S_M}(R_M):N_{S_M}(P_M)|=\frac{(p-1)^{ip}p!}{(p-1)^{i+1}p}=(p-1)^{i(p-1)}(p-2)!. \]
	For each choice of $N_j$ there is just one refined partition $M=M_1\cup\ldots\cup M_p$ leading to $Q=P$. This possibility needs to be subtracted. This proves the theorem for $p>2$.
	
	Finally, let $p=2$. The subgroups $Q$ constructed above also exist here. The corresponding number simplifies to $\sum a_ia_{i+1}$ since $a_i\in\{0,1\}$. Let $i\ge 2$ be such that $a_i=1$. Without loss of generality, let $|N_i|=2^i$. Let $P_i\le\Sym(N_i)$ be the corresponding factor of $P$. By \cref{lem:regsub}, $P_i$ has a unique maximal subgroup $R_i\cong W\wr V=W^4\rtimes V$ such that $W$ is a Sylow 2-subgroup of some $S_{2^{i-2}}$ and $V\cong C_2^2$ permutes the conjugates of $W$ regularly. Since $N_{\Sym(N_i)}(R_i)\cong W\rtimes S_4$, the Sylow 2-subgroups $Q$ containing $R_i$ correspond one-to-one to the three Sylow 2-subgroups of $S_4$. One of them equals $P$. So for each $i$ with $a_i=1$, we obtain one (additional) double coset of size $2|P|$. This yields the second sum $\sum_{i\ge 2}a_i$ in the formula. 
\end{proof}

From the proofs of \cref{lem:Sylow} and \cref{thm:2nd-smallest}, one can extract an algorithm to construct the double cosets of size $p|P_n|$. 

\bigskip
\section{Remarks and problems}\label{sec:5}
Our original goal was to ``understand the Sylow-$p$ double cosets of $S_n$". We approach this by counting; how many double cosets are there and what are their sizes? 
There is still much that is not known. For example, when $p=2$ then \cref{thm:1}(b) shows that $f(n,2)$ is bounded away from 0. We believe in fact that equality holds, although we do not know the limiting distribution for $|P_n\cap P_n^x|$.
It is further natural to ask:
\begin{itemize}
	\item Are there `nice labels' for the double cosets?
	\item What are the structure constants when multiplying double cosets?
	\item Over a field of characteristic $p$, the Hecke algebra
	\[ \cH_n(k) = L_k(P_n\setminus S_n/P_n) = \End_{S_n}(\triv_{P_n}^{S_n}) \]
	is not semisimple. As mentioned in the introduction, when $p$ is odd, we know from \cite[Corollary B]{GL1} that the number of irreducible representations of $\cH_n(k)$ is equal to $|\cP(n)|$, the number of partitions of $n$, whenever $n$ is not a power of $p$ (barring small exceptions when $p=3$ and $n\le 10$). Conversely, if $n=p^\ell$ for some $\ell\in \N$ then $\cH_n(k)$ admits exactly $|\mathcal{P}(n)|-2$ irreducible representations. 
	
	Moreover, in a recent (unpublished) manuscript, Giannelli and Law were able to completely describe those irreducible characters $\chi$ of $S_n$ such that $[\chi, (\triv_{P_n})^{S_n}]=1$. This result, together with \cite[Chapter 11D]{CR81}, allows us to determine the exact number of $1$-dimensional representations of $\cH_n(k)$ and therefore to compute the $k$-dimension of its abelianisation. 
\end{itemize}
But, can we say more?
Does $\cH_n(k)$ have some kind of nice structure? For instance, is it Frobenius or quasi-Frobenius? (See \cite{CR62} for background on quasi-Frobenius algebras.)
For the roughly parallel problems for finite groups $G$ with a split $BN$-pair, there \textit{are} nice answers to these additional questions.
\begin{itemize}
	\item The Yokonuma algebra \cite{Y} gives a nice description of $U\setminus G/U$, closely tied to Bruhat decompositions.
	\item At least in type $A$ and elsewhere \cite{KMS}, there are useful descriptions of the structure constants.
	\item The work of Tinberg \cite{Tinberg} summarised in the introduction shows that $L_k(U\setminus G/U)$ \textit{is} Frobenius (among many other things).
\end{itemize}
All we want is to let $q\to 1$ in type $A$(!). Moreover, work of Kessar--Linckelmann suggests that there is trouble even in the simplest case $C_p\le S_p$. They find $L_k(C_p\setminus S_p/C_p)$ is Frobenius for $p\in\{3,5\}$ but not quasi-Frobenius for $p\ge7$ \cite{KL}. This paper develops general tools for studying properties such as self-injectivity for Hecke algebras for general groups over general fields.

We would like a more quantitative version of \cref{thm:1}, along the lines of \cref{sec:4}. We would like analogues of the counting theorems for $n_a$ for the case of general $n$.

\bigskip

\end{document}